\theoremstyle{definition}
\newtheorem{defin}{Definition}[section]
\theoremstyle{plain}
\newtheorem{lemma}[defin]{Lemma}
\newtheorem{obs}[defin]{Remark}
\newtheorem{prop}[defin]{Proposition}
\newtheorem*{theo-intro}{Theorem}
\newtheorem{theorem}{Theorem}
\newtheorem*{example}{Example}
\newcommand{\id}{\textup{id}}  
\newcommand{\Is}{\textup{Isom}}  
\newcommand{\R}{\mathbb{R}}  
\newcommand{\Z}{\mathbb{Z}} 
\newcommand{\N}{\mathbb{N}} 
\newcommand{\Eucl}{\textup{Eucl-rk}}
\newcommand{\Ab}{\textup{Ab-rk}}
\newcommand{\rk}{\textup{rk}}
\newcommand{\CAT}{\textup{CAT}}
\newcommand{\CATtdu}{ \textup{\normalfont CAT}_0^{\textup{\normalfont td-u}}(P_0,r_0,D_0)}
\renewenvironment{abstract}
{\par\noindent\textbf{\abstractname.}\ \ignorespaces}
{\par\medskip}
\title[Euclidean factor of CAT$(0)$-spaces]{GH-convergence of CAT$(0)$-spaces: stability of the Euclidean factor}
\author{Nicola Cavallucci}
\address{Nicola Cavallucci, EPFL FSB SMA, Station 8, 1015 Lausanne}
\email{n.cavallucci23@gmail.com}
\date{}
\begin{document}
	\maketitle
	
	\footnotesize
	\begin{abstract}
		We prove that if a sequence of geodesically complete CAT$(0)$-spaces $X_j$ with uniformly cocompact discrete groups of isometries converges in the Gromov-Hausdorff sense to $X_\infty$, then the dimension of the maximal Euclidean factor splitted off by $X_\infty$ and $X_j$ is the same, for $j$ big enough. In other words, no additional Euclidean factors can appear in the limit.
	\end{abstract}
	\normalsize
	
	\tableofcontents

	\section{Introduction}
	
	Given a CAT$(0)$-space $X$ we denote by $\text{Eucl-rk}(X)$ the dimension of the maximal Euclidean factor of $X$, i.e. the maximal $k$ for which $X$ splits metrically as $Y \times \R^k$: it is called the \emph{Euclidean rank} of $X$.
	The aim of this paper is to prove the following stability result.
	\begin{theorem}
		\label{theo-intro-stability}
		Let $X_j$ be a sequence of proper, geodesically complete, \textup{CAT}$(0)$-spaces with discrete and $D_0$-cocompact groups $\Gamma_j < \Is(X_j)$. Suppose $X_j$ converges in the pointed Gromov-Hausdorff sense to $X_\infty$. Then
		$$\Eucl(X_\infty) = \lim_{j \to + \infty} \Eucl(X_j).$$
	\end{theorem}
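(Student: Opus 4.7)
The plan is to promote the pointed Gromov--Hausdorff convergence to an equivariant one: using the discreteness and $D_0$-cocompactness of each $\Gamma_j$, together with the uniform packing estimates available in this class of CAT$(0)$-spaces, one extracts a subsequence so that $\Gamma_j \curvearrowright (X_j, x_j)$ converges in the pointed equivariant Gromov--Hausdorff sense to $\Gamma_\infty \curvearrowright (X_\infty, x_\infty)$, where $\Gamma_\infty \leq \Is(X_\infty)$ is a closed subgroup --- still acting cocompactly with constant $D_0$, but no longer necessarily discrete. Since $\Eucl$ takes integer values, the claim reduces to proving, on every convergent subsequence of $\Eucl(X_j)$, the pair of inequalities
\[
\limsup_j \Eucl(X_j) \;\leq\; \Eucl(X_\infty) \;\leq\; \liminf_j \Eucl(X_j).
\]

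The first inequality, expressing persistence of Euclidean factors, is the easier half. Along a subsequence with $\Eucl(X_j) = k$, decompose $X_j = Z_j \times \R^{k}$. The Euclidean factor being canonical, a finite-index subgroup of $\Gamma_j$ respects the decomposition and projects to a discrete cocompact action on $Z_j$ with controlled constants. Compactness of the underlying class of spaces then allows extraction of $Z_j \to Z_\infty$, and continuity of the metric product under pointed GH-convergence gives $X_\infty = Z_\infty \times \R^{k}$, whence $\Eucl(X_\infty) \geq k$.

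The second inequality, stating that no additional Euclidean factors appear in the limit, is the essential content. Write $X_\infty = Y_\infty \times \R^{k_\infty}$ with $k_\infty = \Eucl(X_\infty)$. Maximality of the Euclidean factor forces $\Is(X_\infty)$ to preserve the decomposition and $Y_\infty$ to admit no nontrivial Clifford translation. Combined with a Bieberbach-type analysis of the closed cocompact projection of $\overline{\Gamma_\infty}$ onto $\Is(\R^{k_\infty})$, this produces $k_\infty$ elements of $\overline{\Gamma_\infty}$ acting as Clifford translations of $X_\infty$ along linearly independent Euclidean directions. Equivariant convergence then provides, for each of them, approximating elements $\gamma_{j,i} \in \Gamma_j$ whose displacement functions are arbitrarily close to positive constants on arbitrarily large balls about the basepoint, as $j \to \infty$. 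A quantitative splitting theorem, exploiting geodesic completeness and the convexity of displacement functions, finally upgrades these almost-Clifford translations into a genuine metric decomposition $X_j = Z_j \times \R^{k_\infty}$, giving $\Eucl(X_j) \geq k_\infty$.

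The main obstacle I anticipate is precisely this last upgrade step: a discrete cocompact group of isometries on a generic CAT$(0)$-space need not contain any honest Clifford translation, so the $\gamma_{j,i}$ will typically be only approximately so. The rigidity needed to turn approximate constancy of displacement into exact constancy must come from the combination of uniform packing, $D_0$-cocompactness, geodesic completeness, and the convexity theory of semisimple isometries and their minimal sets. A natural route is to work in an ultralimit $X_\omega$ of the $X_j$: the almost-Clifford $\gamma_{j,i}$ yield exact Clifford translations of $X_\omega$ and hence a genuine Euclidean factor of dimension at least $k_\infty$; transferring this back to $X_j$ for $j$ large along the ultrafilter, again using the packing and cocompactness constants, should produce the required splitting and complete the argument by induction on $k_\infty$.
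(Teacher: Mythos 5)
There is a genuine gap, and it sits exactly where you anticipate it: the descent of the Euclidean factor from $X_\infty$ back to the approximating spaces $X_j$. Your proposed resolution --- pass to an ultralimit $X_\omega$ of the $X_j$, observe that the almost-Clifford elements $\gamma_{j,i}$ become exact Clifford translations there, and then ``transfer back'' --- is circular: by Proposition \ref{prop-GH-ultralimit} the ultralimit $X_\omega$ \emph{is} $X_\infty$, so producing a Euclidean factor of dimension $k_\infty$ in $X_\omega$ only restates the hypothesis $\Eucl(X_\infty)=k_\infty$ and provides no mechanism for splitting $X_j$ itself. The quantitative splitting you invoke does not exist in the form you need: the displacement function of $\gamma_{j,i}$ on $X_j$ is convex, and being within $\varepsilon$ of a positive constant on a ball of radius $1/\varepsilon$ does not force it to be constant, nor does it force $\gamma_{j,i}$ to be hyperbolic. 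Worse, the real danger is degeneration of relations: a word in the $\gamma_{j,i}$ that is elliptic in $\Gamma_j$ (even trivial on the relevant flat) can a priori limit to a nontrivial translation of $X_\infty$, collapsing the rank of the limiting lattice. Ruling this out is the content of Lemma \ref{lemma-key-elliptic-limit} (limits of elliptic isometries are elliptic), which the paper emphasizes is a deep consequence of the machinery of \cite{Cav23-GHTD} and not accessible by a direct packing/convexity argument. Your proposal also ignores the collapsing issue: even with discrete $\Gamma_j$, if $\textup{sys}^\diamond(\Gamma_j,X_j)\to 0$ the limit group need not be totally disconnected and the compactness statement underlying any such transfer (Proposition \ref{prop-non-collapsed-basepoints}) fails; the paper must first replace the $\Gamma_j$ by non-collapsed groups $G_j'$ acting on the same spaces (Proposition \ref{prop-always-noncollapsed}).

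For comparison, the paper does not attempt a direct geometric splitting of $X_j$ at all. It converts the problem into group theory via Theorem \ref{theo-intro-characterization}, $\Eucl(X)=\Ab(G)$ (the maximal rank of an almost abelian, almost commensurated subgroup), and then proves stability of $\Ab$ along the sequence: a basis of the lattice acting on the Euclidean factor of $X_\infty$ is lifted to elements $a_{i,j}\in G_j$, and one shows the closure of $\langle a_{1,j},\dots,a_{k,j}\rangle$ is almost abelian of rank exactly $k$ and almost commensurated for almost every $j$, using Lemma \ref{lemma-key-elliptic-limit} together with the uniform word-length estimate for lattices (Proposition \ref{prop-lattices-uniform-word-length}). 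That group-theoretic detour is precisely what substitutes for the ``upgrade step'' your outline leaves open.
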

	The pointed Gromov-Hausdorff convergence needs basepoints on the spaces $X_j$ and $X_\infty$ in order to be defined. However, under the assumptions above, the limit $X_\infty$ does not depend on the choice of the basepoints, see Section \ref{subsec-basic}. Theorem \ref{theo-intro-stability} is simpler if we assume that each $G_j$ is \emph{torsion-free}.	Under this assumption, and more generally under the \emph{nonsingularity} of the $\Gamma_j$'s, it was proved true by the author and A.Sambusetti in \cite[Corollary 7.17]{CS23}. \\
	We will show a stronger result.
	\begin{theorem}
		\label{theo-intro-td}
		Let $X_j$ be a sequence of proper, geodesically complete, \textup{CAT}$(0)$-spaces with closed, totally disconnected, unimodular and $D_0$-cocompact groups $G_j < \textup{Isom}(X_j)$. If $X_j$ converges in the pointed Gromov-Hausdorff sense to $X_\infty$ then
		$$\Eucl(X_\infty) = \lim_{j \to + \infty} \Eucl(X_j).$$
	\end{theorem}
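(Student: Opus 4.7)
The plan is as follows. The starting point is the equivariant enhancement of pointed Gromov-Hausdorff convergence on the class $\CATtdu$, developed in the earlier sections of the paper: after extracting a subsequence, I may assume that the convergence $(X_j,G_j)\to(X_\infty,G_\infty)$ is equivariant, with $G_\infty$ still closed, totally disconnected, unimodular and $D_0$-cocompact. This is combined with the Caprace-Monod characterization of the Euclidean factor of a proper geodesically complete CAT$(0)$-space with td-u cocompact isometry group as the orbit of the amenable radical of the group.

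For the inequality $\Eucl(X_\infty)\ge\limsup_j\Eucl(X_j)$, I pass to a subsequence with $\Eucl(X_j)=k$ constant and take the canonical de Rham splitting $X_j=Y_j\times\R^k$. The $\R^k$-factor carries the action of a translation subgroup $\Lambda_j\subset G_j$ containing a cocompact lattice of $\R^k$; by equivariant convergence $\Lambda_j$ limits onto a translation subgroup of $G_\infty$ whose orbit is a totally geodesic Euclidean $k$-subspace of $X_\infty$, which standard arguments then promote to a genuine product splitting $X_\infty=Y_\infty\times\R^k$.

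The substantive direction is $\Eucl(X_\infty)\le\liminf_j\Eucl(X_j)$. Combining with the previous step and factoring off a common $\R^k$, it suffices to prove: if $\Eucl(Y_j)=0$ for all $j$ and $(Y_j,H_j)\to(Y_\infty,H_\infty)$ equivariantly in $\CATtdu$, then $\Eucl(Y_\infty)=0$. I would argue by contradiction. A nontrivial Euclidean factor of $Y_\infty$ produces a nontrivial Clifford translation $c_\infty\in\Is(Y_\infty)$ which, by Caprace-Monod, may be taken in the amenable radical of $H_\infty$, so that it centralizes a closed finite-index subgroup $H_\infty'\le H_\infty$ preserving the de Rham decomposition. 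Picking $g_j\in H_j$ with $g_j\to c_\infty$, together with finitely many $\gamma_j\to\gamma_\infty$ whose $H_\infty'$-translates cover a fundamental domain, the goal is to show that for $j$ large $g_j$ itself is a Clifford translation of $Y_j$, contradicting $\Eucl(Y_j)=0$.

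The main obstacle, and the place where the td-u assumption does work beyond the discrete nonsingular case of \cite{CS23}, is the \emph{upgrade step}: from the approximate commutations $[g_j,\gamma_j]\to 1$ and the exact constant-displacement property of $c_\infty$ on $Y_\infty$, one must extract exact commutation relations and exact constancy of $d(\cdot,g_j\cdot)$ on $Y_j$. The mechanism I envisage exploits the compact open stabilizers $U_j$: since $c_\infty$ commutes with $U_\infty$ and compact open subgroups persist through the convergence, one should be able to adjust $g_j$ by a small element of $U_j$ so that it genuinely commutes with $U_j$ and with a cocompact system of $\gamma_j$'s, and then use convexity of the displacement function together with geodesic completeness to force $d(\cdot,g_j\cdot)$ to be constant on $Y_j$. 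Making this passage from approximate to exact precise — realizing the cocompact portion of the centralizer of $c_\infty$ in $H_\infty$ as a limit of cocompact portions of centralizers of $g_j$ in $H_j$ — is the step that I expect to demand the most care.
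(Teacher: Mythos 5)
Your high-level frame (pass to an equivariant limit, get the inequality $\Eucl(X_\infty)\ge\limsup_j\Eucl(X_j)$ from persistence of Euclidean factors, then fight for the reverse inequality) agrees with the paper, but the route you propose for the hard direction is different and, as written, has two genuine gaps.

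The first gap is in your opening reduction: after extracting an equivariant limit $(X_\infty,G_\infty)$, the group $G_\infty$ is \emph{not} automatically totally disconnected. By Proposition \ref{prop-limittd-iff-noncollapsed} this holds if and only if the sequence is non-collapsed, i.e. $\limsup_{j}\textup{sys}^\diamond(G_j,X_j)>0$, and collapsing can certainly occur even for td-u groups. The paper must first apply Proposition \ref{prop-always-noncollapsed} to replace the $G_j$ by new closed, td-u, uniformly cocompact groups $G_j'$ acting on the \emph{same} spaces $X_j$ so that the new sequence is non-collapsed; only then does the limit group become totally disconnected and the td toolbox (compact open subgroups, discrete orbits, compactness of groups generated by small isometries) become available. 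Your argument silently assumes all of this and would break on a collapsing sequence. The second gap is the ``upgrade step'' you yourself flag: it is the entire difficulty, and your sketch does not close it. Knowing that $g_j\to c_\infty$ with $c_\infty$ Clifford, and that $[g_j,\gamma_j]\to 1$ for a cocompact family of $\gamma_j$, gives no mechanism for correcting $g_j$ by an element of a compact open subgroup so that the commutation becomes exact; if you could do that, minimality of the action would indeed give $\textup{Min}(g_j)=Y_j$ and a contradiction, but the correction is precisely where a new idea is required and none is supplied.

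For comparison, the paper never upgrades approximate commutation to exact commutation. It replaces the Clifford-translation picture by an algebraic invariant: Theorem \ref{theo-intro-characterization} identifies $\Eucl(X)$ with $\Ab(G)$, the maximal rank of an almost abelian, almost commensurated subgroup, and Theorem \ref{theo-intro-stability-abelian-rank} proves stability of $\Ab$ along the convergence. There the passage from approximate to exact is achieved by upgrading only to exact \emph{compactness}: commutators $[a_{i,j},a_{\ell,j}]$ move a basepoint by a small amount, hence by Proposition \ref{prop-non-collapsed-basepoints} generate a relatively compact group, so $\overline{A_j}$ is almost abelian by Lemma \ref{lemma-almost-commutator}; the rank is then shown not to drop using the uniform lattice word-length bound (Proposition \ref{prop-lattices-uniform-word-length}) combined with the elliptic-limit Lemma \ref{lemma-key-elliptic-limit}. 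If you want to salvage your Clifford-translation strategy you would have to reconstruct substitutes for exactly these three ingredients; as it stands the proposal identifies the right difficulty but does not resolve it.
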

	
	Theorem \ref{theo-intro-td} will be consequence of the precise description of the behaviour of the groups $G_j$ as in the statement given in \cite{Cav23-GHTD}. We remark that the difficulty in proving Theorem \ref{theo-intro-stability} and Theorem \ref{theo-intro-td} is substantially the same. The motivations for that are well described in \cite{Cav23-GHTD}, in which it is shown that, even if the groups $G_j$'s are discrete, the limit  $G_\infty < \Is(X_\infty)$ is in general non-discrete. However it is always totally disconnected, provided the sequence $G_j$ is non-collapsed. In \cite{Cav23-GHTD} it is proved that every sequence $G_j$ as above can be transformed in a non-collapsed one without changing the isometry type of the spaces $X_j$, we refer to Section \ref{subsec-basic-convergence} for the details. Another motivation for studying the totally disconnected, unimodular case is provided by the decomposition of \cite{CM09b}, where these groups appear naturally. We will come back to this at the end of the introduction.\\
	Our proof of Theorem \ref{theo-intro-td} will follow the same scheme of the proof of Theorem \ref{theo-intro-stability} in the torsion-free case given in \cite{CS23}, overcoming the complications resulting from the total disconnection of the groups. One of the missing ingredients is an analogue, in our situation, of \cite[Theorem 2]{CM19}, which characterizes the Euclidean rank of a proper CAT$(0)$-space with a uniform lattice $\Gamma$ as the maximal rank of a free abelian commensurated subgroup of $\Gamma$. We define the \emph{abelian rank} of a locally compact group $G$ as the maximal rank of an almost abelian, almost commensurated subgroup and we denote it by $\Ab(G)$. The notions of almost abelianity and almost commensurability has been introduced in \cite{Cav23-GHTD}, see also Section \ref{subsec-basic-almost-abelian} for more details. The generalization of \cite[Theorem 2]{CM19} is the following.
	\begin{theorem}
		\label{theo-intro-characterization}
		Let $X$ be a proper, geodesically complete, \textup{CAT}$(0)$-space and let $G<\textup{Isom}(X)$ be closed, totally disconnected, unimodular and cocompact. Then $\Eucl(X)=\Ab(G)$.
	\end{theorem}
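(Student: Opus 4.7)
The plan is to adapt the proof of \cite[Theorem~2]{CM19} from the setting of a discrete uniform lattice to that of a closed, totally disconnected, unimodular, cocompact $G$, replacing free abelian commensurated subgroups by their almost-analogues introduced in \cite{Cav23-GHTD}.

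\textbf{Step 1} ($\Eucl(X) \leq \Ab(G)$). We write $X = Y \times \R^k$ with $Y$ having no Euclidean factor, so that $G$ preserves the splitting by the uniqueness of the maximal Euclidean factor. The induced continuous homomorphism $\pi \colon G \to \textup{Isom}(\R^k) = \R^k \rtimes O(k)$ has cocompact image, hence cocompact closure, and the translation part $T := \overline{\pi(G)} \cap \R^k$ is a closed subgroup of $\R^k$ of full $\R$-rank. Using that $G$ is totally disconnected, we lift a cocompact $\Z^k \subseteq T$ to a closed subgroup $H \leq G$; the structure of this lift, an extension of $\Z^k$ by a compact-open subgroup acting trivially on the $\R^k$ factor, witnesses that $H$ is almost abelian of rank $k$. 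The normality of $\R^k$ in $\textup{Isom}(\R^k)$ combined with $G$-conjugation equivariance of $\pi$ yields almost commensuratedness of $H$ in $G$.

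\textbf{Step 2} ($\Ab(G) \leq \Eucl(X)$). Let $H \leq G$ be almost abelian almost commensurated of rank $k$, with compact normal subgroup $K \triangleleft H$ and $H/K$ virtually $\Z^k$; choose lifts $h_1, \dots, h_k \in H$ of a basis. The fixed-point set $F = \textup{Fix}(K) \subseteq X$ is non-empty, closed, convex and geodesically complete, and the $h_i$ restrict to commuting isometries of $F$. On $F$, the joint min-set of their displacement functions contains a non-empty $H/K$-invariant subset on which the $h_i$ translate along a $k$-dimensional flat. The almost commensurability of $H$ in $G$, combined with cocompactness of the $G$-action on $X$, propagates these parallel flats across $X$. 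Applying the splitting theorem for \textup{CAT}$(0)$-spaces in the form that commuting Clifford translations generate a Euclidean factor, one concludes $X = Y' \times \R^k$.

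\textbf{Main obstacle.} The delicate point is Step~2: upgrading the commuting $h_i$ with controlled joint displacement to genuine Clifford translations of $X$. In the discrete torsion-free case of \cite{CM19, CS23} this uses the structure of free abelian subgroups commensurated by the lattice; here the compact kernels $K$ and the compact-open subgroups of $G$ obstruct a direct translation of the argument, and we must show that the almost-commensurated $G$-orbit of the would-be $k$-flat fills out $X$ enough to force a global Euclidean direction. Geodesic completeness, cocompactness, and the totally disconnected non-collapsed framework from \cite{Cav23-GHTD} are what make this feasible.
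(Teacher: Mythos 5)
There are genuine gaps in both directions.

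\textbf{Step 1.} The crux of this inequality is precisely the point you pass over in one sentence: producing a closed subgroup $H\leq G$ that is an extension of $\Z^k$ by a \emph{compact} open subgroup. The kernel of your projection $\pi\colon G\to\Is(\R^k)$ is $G\cap\Is(Y)$, which is far from compact (it is roughly a cocompact subgroup of the $Y$-factor), and an arbitrary choice of lifts of generators of a cocompact $\Z^k\subseteq T$ has completely uncontrolled components in $\Is(Y)$; nothing you say forces those components to generate a relatively compact group, and total disconnectedness of $G$ alone does not do it. The paper's route through this is the Caprace--Monod decomposition (Proposition \ref{prop-CM-decomposition}, available because $\Is(X)$ is unimodular and cocompact by Remark \ref{rmk-unimodularity-td-isom}): $\Is(X)=S\times\mathcal{E}_k\times D$ with $S$ semisimple and $D$ totally disconnected. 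One then intersects $G$ with $S\times\mathcal{E}_k\times Q$ for a compact open $Q<D$, shows this intersection $G^*$ is commensurated by $G$ and projects to a \emph{uniform lattice} in the Lie group $S\times\mathcal{E}_k$, and invokes the splitting lemmas \cite[Lemmas 3.4 and 3.5]{CM09a} to extract $G^*_{\mathcal{E}_k}=G^*\cap(1\times\mathcal{E}_k\times Q)$, which surjects onto a lattice of $\mathcal{E}_k$ with kernel inside the compact $Q$ (Proposition \ref{prop-commensurated-product}). Your sketch never confronts the semisimple factor $S$ at all, and without the lattice-splitting step there is no reason a lift of $\Z^k$ can be chosen with compact kernel. (A smaller issue: $\overline{\pi(G)}$ need not be totally disconnected, since the kernel of $\pi$ is noncompact, so the "compact-open subgroup" language does not obviously transfer to the image.)

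\textbf{Step 2.} You have correctly identified the obstacle, but identifying it is not the same as overcoming it: the passage from commuting hyperbolic isometries with a $k$-flat in their joint min-set to a global splitting $X=Y'\times\R^k$ is exactly the content that must be proved, and your paragraph ends by saying it "must be shown". As written this direction is a plan, not a proof. The paper does not reprove it; it quotes Proposition \ref{prop-characterization-inequality} (\cite[Proposition 4.5]{Cav23-GHTD}), which states that a closed, almost abelian, almost commensurated subgroup of rank $k$ of a closed, totally disconnected, cocompact $G<\Is(X)$ forces a $G$-invariant splitting $X=Y\times\R^k$. If you are allowed to cite that result, Step 2 is a one-line appeal to it; if not, you owe the full argument, and your outline (in particular the unjustified appeal to "commuting Clifford translations") does not supply it.
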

	
	This allows to approach Theorem \ref{theo-intro-td} by using the abelian rank. Ultimately what we will show is the next result.
	\begin{theorem}
		\label{theo-intro-stability-abelian-rank}
		Let $X_j$ be a sequence of proper, geodesically complete, \textup{CAT}$(0)$-spaces with closed, totally disconnected, unimodular and $D_0$-cocompact groups $G_j < \textup{Isom}(X_j)$. Suppose $(X_j. G_j)$ converges in the equivariant pointed Gromov-Hausdorff sense to $(X_\infty, G_\infty)$ with $G_\infty$ totally disconnected. Then
		$$\Ab(G_\infty) = \lim_{j\to + \infty} \Ab(G_j).$$
	\end{theorem}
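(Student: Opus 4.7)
The plan is to prove the two inequalities $\Ab(G_\infty)\le\liminf_j \Ab(G_j)$ and $\Ab(G_\infty)\ge\limsup_j \Ab(G_j)$ separately. The equivariant pointed Gromov--Hausdorff convergence supplies approximation maps that transfer sequences of elements of $G_j$ to $G_\infty$ and vice versa, and the total disconnection of $G_\infty$ and of each $G_j$ provides arbitrarily small compact open subgroups, which will play the role of an ``identity neighbourhood'' against which one measures commutation and approximation. Throughout, Theorem~\ref{theo-intro-characterization} is available to translate abelian rank into geometric data (the dimension of the Euclidean factor of the ambient CAT$(0)$-space).

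For the lower bound $\Ab(G_\infty)\le\liminf_j \Ab(G_j)$, I would fix $k=\Ab(G_\infty)$ together with an almost abelian, almost commensurated subgroup $A\le G_\infty$ of rank $k$, and choose elements $\alpha_1,\dots,\alpha_k\in A$ that, modulo a compact open normal subgroup of $A$, generate a free abelian group of rank $k$. By the equivariant convergence there are approximants $\alpha_{i,j}\in G_j$ with $\alpha_{i,j}\to\alpha_i$. The identity $[\alpha_{i,j},\alpha_{\ell,j}]\to[\alpha_i,\alpha_\ell]$, whose limit lies in a compact open subgroup of $G_\infty$, forces the commutators in $G_j$ into arbitrarily small compact open subgroups, showing that $\langle\alpha_{1,j},\dots,\alpha_{k,j}\rangle$ is almost abelian. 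Its rank is at least $k$: by Theorem~\ref{theo-intro-characterization}, rank is detected by the translation vectors in the Euclidean factor, and those of the $\alpha_{i,j}$ converge to the (linearly independent) ones of the $\alpha_i$. Almost commensurability in $G_j$ is obtained by approximating the elements that witness the almost commensurability of $A$ in $G_\infty$.

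For the upper bound $\Ab(G_\infty)\ge\limsup_j \Ab(G_j)$, I extract a subsequence along which $\Ab(G_j)=k$ is constant, realised by almost abelian, almost commensurated subgroups $A_j\le G_j$ with distinguished elements $\alpha_{1,j},\dots,\alpha_{k,j}\in A_j$ generating a rank-$k$ free abelian piece modulo a compact open normal subgroup. The crucial step is to arrange that these generators have displacement at the basepoint of $X_j$ bounded uniformly in $j$; this uses $D_0$-cocompactness of $G_j$ to conjugate the relevant Clifford-type axes (or their substitutes in the presence of torsion) close to the basepoint. With uniform control the $\alpha_{i,j}$ subconverge to elements $\alpha_i\in G_\infty$, commutation relations persist in the limit, and $\langle\alpha_1,\dots,\alpha_k\rangle$ is almost abelian in $G_\infty$; its almost commensurability is transported along the same subconvergence from the commensurating elements in $G_j$. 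The rank equals $k$ once more by tracking translation vectors in the Euclidean factors of $X_j$ and $X_\infty$ via Theorem~\ref{theo-intro-characterization}.

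The main obstacle is preventing rank collapse in the upper bound: a priori, commuting independent elements $\alpha_{i,j}$ could have translation vectors that degenerate to linearly dependent ones in the limit, causing the rank of $\langle\alpha_1,\dots,\alpha_k\rangle$ to drop. This will be addressed via a Margulis-type lemma giving uniform lower bounds on the translation lengths of the non-elliptic parts of the generators at the basepoint in the non-collapsed setting, combined with a careful rechoice of generators (replacing them by suitable integer combinations) that separates their translation directions in the Euclidean factor. A secondary technical difficulty is consistently handling the word ``almost'' in almost abelian and almost commensurated: every notion must be quantified modulo compact open subgroups shrinking to $\{e\}$ along the convergence, and this is precisely where the total disconnection of $G_\infty$ is indispensable, since it rules out a connected identity component absorbing extra abelian rank in the limit.
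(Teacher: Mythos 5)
Your overall shape (transfer generators between $G_j$ and $G_\infty$, measure ``almost'' against compact open subgroups, use Theorem~\ref{theo-intro-characterization} to convert between abelian rank and Euclidean rank) matches the paper, but you have weighted the two directions wrongly and left the crucial step unproved. The direction $\Ab(G_\infty)\ge\limsup_j\Ab(G_j)$ is the \emph{easy} one and needs none of the machinery you propose: by Theorem~\ref{theo-intro-characterization} one has $\Ab(G_j)=\Eucl(X_j)$, and an $\R^k$-factor of infinitely many $X_j$ persists in the pointed GH limit, so $\Eucl(X_\infty)\ge\limsup_j\Eucl(X_j)$; no transfer of group elements, Margulis-type lemma, or rechoice of generators is needed there. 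The genuine difficulty --- exactly the ``rank collapse'' you describe --- sits in the other direction, $\Ab(G_\infty)\le\liminf_j\Ab(G_j)$, and your treatment of it there is a gap. You assert that the rank of $\langle\alpha_{1,j},\dots,\alpha_{k,j}\rangle$ is at least $k$ because ``the translation vectors of the $\alpha_{i,j}$ converge to the linearly independent ones of the $\alpha_i$.'' This is unjustified and is the heart of the matter: the flats on which the approximating groups act as lattices (Proposition~\ref{prop-almost-abelian-lattice}) vary with $j$ and need not converge to the limit flat, and a priori the $\alpha_{i,j}$ could satisfy a relation $\alpha_{k,j}=\prod_i\alpha_{i,j}^{m_{i,j}}\cdot(\text{elliptic})$ that kills a dimension for every $j$ while disappearing in the limit. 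Ruling this out requires two nontrivial inputs you do not supply: a word-length bound, uniform in $j$, for lattices with systole bounded below and basis of bounded length (Proposition~\ref{prop-lattices-uniform-word-length}), which bounds the exponents $m_{i,j}$ so that the would-be relation passes to the ultralimit; and the fact that an admissible sequence of elliptic isometries has elliptic ultralimit (Lemma~\ref{lemma-key-elliptic-limit}), which the paper flags as a deep consequence of the non-collapsed structure theory and which is exactly what turns the limiting relation into a contradiction with $A$ acting as a rank-$k$ lattice on its flat.

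A second, smaller gap: almost commensuratedness of the approximating groups cannot be obtained by ``approximating the elements that witness the almost commensurability of $A$.'' The condition must be verified against all $g_j\in G_j$ with $d(x_j,g_jx_j)\le 2D_0$ (via Lemma~\ref{lemma-commensurability-generators}), and one must argue by contradiction that a sequence of bad conjugators would produce, in the ultralimit, a subgroup $g_\omega A_\omega g_\omega^{-1}\cap A_\omega$ of full rank whose approximants again have full rank --- i.e. the entire rank-preservation argument has to be rerun for the conjugated groups. Finally, compact generation of the closures $\overline{\langle\alpha_{1,j},\dots,\alpha_{k,j}\rangle}$, which is needed before the almost-abelian criterion of Lemma~\ref{lemma-almost-commutator} can be applied, also requires an argument (via discreteness of $G_j$-orbits) that your sketch omits.
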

	
	A natural question is the following: what happens if we have a sequence of proper, geodesically complete, $\CAT(0)$-spaces $X_j$ converging to $X_\infty$ and we only know that $\Is(X_j)$ is $D_0$-cocompact and unimodular for every $j$? 
	Here the unimodularity assumption of $\Is(X_j)$ is very natural and it is satisfied as soon as $\Is(X_j)$ possesses a lattice. The next example shows that in general the limit space $X_\infty$ can split more Euclidean factors.
	\begin{example}
		Let $M$ be a symmetric space of non-compact type equipped with a fixed Riemannian metric $g$ of non-positive curvature. Notice that $\Is(M)$ is unimodular and transitive, so $0$-cocompact. Consider now the metric $\lambda g$ on $M$ with $\lambda \geq 1$: it is again a geodesically complete $\CAT(0)$-space whose isometry group is unimodular and $0$-cocompact. For $\lambda$ going to $+\infty$ this sequence of spaces converges in the pointed Gromov-Hausdorff sense to $\R^n$, where $n$ is the dimension of $M$.\\
		Observe that there are no lattices with uniform codiameter along the spaces of this sequence, so the assumptions of Theorem \ref{theo-intro-stability} are not satisfied.
	\end{example}
	
	However, using the decomposition results of \cite{CM09b} we also cited before, we can prove that the above example is in some sense the only thing that can happen.
	
	\begin{theorem}
		\label{theo-intro-limit-whole-isometries}
		Let $X_j$ be a sequence of proper, geodesically complete, \textup{CAT}$(0)$-spaces with $\Is(X_j)$ unimodular and $D_0$-cocompact. Suppose $X_j$ converges in the pointed Gromov-Hausdorff sense to $X_\infty$. Then each $X_j$ splits as $M_j \times \R^{k_j} \times N_j$, where $M_j$ is a symmetric space of non-compact type, $k_j = \Eucl(X_j)$ and $\Is(N_j)$ is totally disconnected and unimodular. Moreover for every subsequence $\lbrace j_k \rbrace$ for which $M_{j_h}$ converges to a space $M_\infty$ we have
			$$\Eucl(X_\infty) = \Eucl(M_\infty) + \lim_{h \to + \infty} \Eucl(X_{j_h}).$$
	\end{theorem}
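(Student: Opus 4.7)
The plan is to reduce Theorem~\ref{theo-intro-limit-whole-isometries} to Theorem~\ref{theo-intro-td} via the Caprace--Monod splitting \cite{CM09b}. Under the hypotheses on $\Is(X_j)$ (unimodular and $D_0$-cocompact on a proper geodesically complete \CAT$(0)$-space), that theorem yields a canonical isometric decomposition $X_j = M_j \times \R^{k_j} \times N_j$, where $M_j$ is a symmetric space of non-compact type, $\R^{k_j}$ is the maximal Euclidean factor and $\Is(N_j)$ is totally disconnected. Since the isometry group of a de Rham product with pairwise non-isometric factors splits as a direct product of the factor isometry groups, the unimodularity and the $D_0$-codiameter bound of $\Is(X_j)$ descend to each of $\Is(M_j)$, $\Is(\R^{k_j})$ and $\Is(N_j)$. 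In particular $\Eucl(N_j)=0$, for any $\R$-factor of $N_j$ would produce a one-parameter translation subgroup in $\Is(N_j)$, contradicting its total disconnection; thus $\Eucl(X_j)=k_j$ as claimed.

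Now fix any subsequence $\{j_h\}$ along which $M_{j_h}$ converges in the pointed Gromov--Hausdorff sense to $M_\infty$. By the standard compactness of pGH convergence for proper \CAT$(0)$-spaces with uniformly cocompact isometry groups (Section~\ref{subsec-basic-convergence}), and because $k_{j_h}$ is uniformly bounded by the geometric dimension of $X_j$, we may pass to a further subsequence and assume $N_{j_h}\to N_\infty$ in pGH and that $k_{j_h}$ stabilizes at some constant $k$ (indeed, a pGH-limit of $\R^{k_{j_h}}$ is a Euclidean space of the same dimension). Choosing product basepoints aligned with the decomposition and using that pGH commutes with finite direct products, we obtain
\[
X_\infty = M_\infty \times \R^k \times N_\infty .
\]

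Applying Theorem~\ref{theo-intro-td} to the sequence $\{(N_{j_h},\Is(N_{j_h}))\}$, whose hypotheses are now fully verified, gives $\Eucl(N_\infty)=\lim_h \Eucl(N_{j_h})=0$, and additivity of the Euclidean rank under direct products then yields
\[
\Eucl(X_\infty)=\Eucl(M_\infty)+k+0=\Eucl(M_\infty)+\lim_{h\to+\infty}\Eucl(X_{j_h}).
\]
The main obstacle is the bookkeeping needed to make the Caprace--Monod splitting compatible with pGH convergence: one has to pick basepoints on the $X_j$ aligned with their product decompositions and verify that the three factor sequences carry uniform cocompactness and unimodularity bounds so that they can be extracted in pGH simultaneously and reassembled into $X_\infty$. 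Once this uniformity is in hand, the result follows at once from Theorem~\ref{theo-intro-td} and the product additivity of $\Eucl(\,\cdot\,)$.
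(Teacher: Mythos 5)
Your argument is correct and follows the same overall strategy as the paper: split off the Caprace--Monod decomposition $X_j = M_j \times \R^{k_j} \times N_j$ and reduce to Theorem~\ref{theo-intro-td}. The one genuine difference is how the Euclidean factor is fed into that theorem. You treat $\R^{k_{j_h}}$ and $N_{j_h}$ separately: you pass to a further subsequence along which $k_{j_h}$ stabilizes at some $k$ and $N_{j_h}$ converges to $N_\infty$, apply Theorem~\ref{theo-intro-td} to $(N_{j_h},\Is(N_{j_h}))$ (whose hypotheses are indeed met, since Proposition~\ref{prop-CM-decomposition}(c) gives that $\Is(N_j)$ is totally disconnected, unimodular and cocompact with codiameter at most $D_0$), and conclude by additivity of $\Eucl$. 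The paper instead lumps the two factors together as $Y_j=\R^{k_j}\times N_j$ and replaces the full (non--totally-disconnected) translation group of $\R^{k_j}$ by a $1$-cocompact lattice $\Gamma_j$, so that $\Gamma_j\times\Is(N_j)$ is totally disconnected, unimodular and $\sqrt{1+D_0^2}$-cocompact on $Y_j$; Theorem~\ref{theo-intro-td} applied to $Y_{j_h}$ then directly produces the existence of $\lim_h \Eucl(Y_{j_h})=\lim_h\Eucl(X_{j_h})$ along the whole subsequence $\lbrace j_h\rbrace$. This is the advantage of the paper's bookkeeping: your route proves the identity only along the further subsequence where $k_{j_h}\equiv k$, whereas the statement asserts that $\lim_h\Eucl(X_{j_h})$ exists along all of $\lbrace j_h\rbrace$. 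The repair is one line --- every further subsequence of $\lbrace j_h\rbrace$ along which $k_{j_h}$ and $N_{j_h}$ converge forces $k=\Eucl(X_\infty)-\Eucl(M_\infty)$, a quantity independent of the further subsequence, so all subsequential limits of $k_{j_h}=\Eucl(X_{j_h})$ agree and the full limit exists --- but you should say it. With that sentence added, your proof is complete and arguably slightly more transparent, at the cost of the extra extraction and the (routine but unstated) verification that the pGH limit of the three-fold product is the product of the limits, a point the paper also leaves implicit for the two-fold product $M_\infty\times Y_\infty$.
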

	
	We end the introduction with a classical rigidity application of stability results as Theorem \ref{theo-intro-td}, when coupled with some form of compactness.
	\begin{theorem}
		\label{theo-intro-rigidity-packed}
		Let $P_0,r_0,D_0 > 0$. Then there exists $\varepsilon = \varepsilon(P_0,r_0,D_0) > 0$ such that the following holds true. Let $X, Y$ be proper, geodesically complete, $(P_0,r_0)$-packed, $\CAT(0)$-spaces with closed, totally disconnected, unimodular, $D_0$-cocompact groups $G<\Is(X)$ and $H<\Is(Y)$. Let $x\in X$ and $y\in Y$ be points. If $d_{\textup{GH}}(\overline{B}(x,1/\varepsilon), \overline{B}(y,1/\varepsilon)) \leq \varepsilon$ then $\Eucl(X) = \Eucl(Y)$.
	\end{theorem}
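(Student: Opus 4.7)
The plan is to argue by contradiction using pointed Gromov-Hausdorff precompactness and Theorem \ref{theo-intro-td}. Suppose no such $\varepsilon$ works; then for each $j$ there exist spaces $X_j, Y_j$ with basepoints $x_j, y_j$ and groups $G_j < \Is(X_j)$, $H_j < \Is(Y_j)$ satisfying all the packing, cocompactness, unimodularity and total disconnectedness hypotheses, with $d_{\textup{GH}}(\overline{B}(x_j, j), \overline{B}(y_j, j)) \leq 1/j$, but $\Eucl(X_j) \neq \Eucl(Y_j)$.

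Next I would invoke precompactness. The uniform $(P_0, r_0)$-packing condition yields a uniform bound on the cardinality of $r_0$-nets in every ball of fixed radius around $x_j$ (resp.\ $y_j$), so by Gromov's precompactness theorem, up to subsequence $(X_j, x_j) \to (Z, z)$ in the pointed Gromov-Hausdorff topology, and the limit $Z$ is again a proper, geodesically complete $\CAT(0)$-space. The ball-distance assumption $d_{\textup{GH}}(\overline{B}(x_j, j), \overline{B}(y_j, j)) \to 0$ ensures that, for any fixed $R$, once $j > R$ the balls $\overline{B}(y_j, R)$ are arbitrarily close in the Gromov-Hausdorff sense to $\overline{B}(x_j, R)$, hence to $\overline{B}(z, R) \subset Z$. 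Consequently $(Y_j, y_j) \to (Z, z)$ as well, with the same pointed limit.

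Finally, both sequences fall under the hypotheses of Theorem \ref{theo-intro-td}, so applying it twice gives
\[
\Eucl(X_j) \to \Eucl(Z) \quad \text{and} \quad \Eucl(Y_j) \to \Eucl(Z).
\]
Since $\Eucl$ is integer-valued, $\Eucl(X_j) = \Eucl(Y_j) = \Eucl(Z)$ for all $j$ sufficiently large, contradicting the choice of the sequences.

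The substantive work is already absorbed in Theorem \ref{theo-intro-td}; once that is in hand, the present statement is obtained by the standard contradiction-by-compactness mechanism and no further input is required. The only mildly delicate point is checking that the two sequences share the same pointed Gromov-Hausdorff limit, which is immediate from the vanishing of the ball distance at diverging scales.
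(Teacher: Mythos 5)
Your proposal is correct and follows essentially the same route as the paper's proof: argue by contradiction, use the packing condition to extract pointed Gromov--Hausdorff limits of both sequences (the paper packages this as Proposition \ref{prop-GH-compactness-packing}), identify the two limits via the vanishing ball distance, and apply Theorem \ref{theo-intro-td} twice to force $\Eucl(X_j)=\Eucl(Y_j)$ for large $j$. The only difference is cosmetic: you spell out the identification of the two limits slightly more carefully than the paper does.
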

	Here $d_{\textup{GH}}$ denotes the Gromov-Hausdorff distance on the class of compact metric spaces up to isometry. Essentially if two space as in the statement have balls \emph{of fixed radius} which are similar \emph{up to a fixed error} then they must split the same Euclidean factor. This is an istance of a general principle: the geometric information of spaces as in the statement of Theorem \ref{theo-intro-rigidity-packed} should be detected \emph{looking at a fixed scale}. We will not pursue this naive idea further in this paper.	
	
 \section{Notation and recalls}
 We recall the essential tools we will need throughout the paper.
 
 \subsection{$\CAT(0)$-spaces, packing and isometries}
 \label{subsec-basic}
 In the paper $X$ will denote a {\em proper} metric space with distance $d$. 
 %The open (resp. closed) ball in $X$ of radius $r$, centered at $x \in X$, will be denoted by $B(x,r)$ (resp.  $\overline{B}(x,r)$).
 A {\em geodesic} in $X$ is an isometry $c\colon [a,b] \to X$, where $[a,b]$ is an interval of $\mathbb{R}$. The {\em endpoints} of the geodesic $c$ are the points $c(a)$ and $c(b)$; a geodesic with endpoints $x,y\in X$ is also denoted by $[x,y]$. 
 A \emph{geodesic line} is an isometry $c\colon \mathbb{R} \to X$. 
 A metric space $X$ is called   {\em geodesic}  if for every two points $x,y \in X$ there is a geodesic with endpoints $x$ and $y$. 
 
 \noindent A metric space $X$ is called CAT$(0)$ if it is geodesic and every geodesic triangle $\Delta(x,y,z)$  is thinner than its Euclidean comparison triangle   $\overline{\Delta} (\bar{x},\bar{y},\bar{z})$: that is,  for any couple of points $p\in [x,y]$ and $q\in [x,z]$ we have $d(p,q)\leq d(\bar{p},\bar{q})$ where $\bar{p},\bar{q}$ are the corresponding points in $\overline{\Delta} (\bar{x},\bar{y},\bar{z})$ (see   for instance \cite{BH09} for the basics of CAT$(0)$-geometry).
 A CAT$(0)$-space is {\em uniquely geodesic}: for every $x,y \in X$ there exists a unique geodesic with endpoints $x$ and $y$.

 \noindent A CAT$(0)$-metric space $X$ is {\em geodesically complete} if every geodesic can be extended to a geodesic line.
 \vspace{2mm}
 
 Let $\text{Isom}(X)$ be the group of isometries of $X$, endowed with the compact-open topology: as $X$ is proper, it is a topological, locally compact, $\sigma$-compact  group. 
 %Let $G$ be a closed subgroup of Isom$(X)$. For $x\in X$ and $r,R\geq 0$ we set 
 %\begin{equation}	
 %	\label{defsigma} 	
 %	\overline{S}_r(x) := \lbrace g\in G \text{ s.t. } d(x,gx) \leq r\rbrace.
 %\end{equation}
 %When $r = 0$ we use the notation $\overline{S}_0(x) =: \textup{Stab}_G(x)$: it is the stabilizer of the point $x$. The sets $\overline{S}_r(x, R)$ are compact by Ascolì-Arzelà Theorem, since $X$ is proper. 
 \noindent A subgroup $G$ is \emph{totally disconnected} if it is totally disconnected as a subset of \textup{Isom}$(X)$ (with respect to the compact-open topology). 
 A closed group $G < \textup{Isom}(X)$ is said to be {\em cocompact} if the quotient metric space $G \backslash X$ is compact; in this case, we call \emph{codiameter} of $G$ the diameter of the quotient, and we will say that $G$ is $D_0$-cocompact if it has codiameter at most $D_0$.
% We recall here a couple of properties of closed, cocompact, totally disconnected groups of isometries of geodesically complete, proper, CAT$(0)$-spaces. More details and properties can be found in \cite{Cav23-GHTD}, \cite{CM09-Structure} and \cite{Cap09-TD}.
% \begin{prop}
% 	\label{prop-TD-CAT-properties}
% 	Let $X$ be a proper, geodesically complete, $\CAT(0)$-space and $G < \textup{Isom}(X)$ be closed, totally disconnected and $D_0$-cocompact. Then:
% 	\begin{itemize}
% 		\item[(i)] the action of $G$ is \emph{smooth}, i.e. the stabilizer $\textup{Stab}_G(x)$ is open for every $x\in X$;
% 		\item[(ii)] all the orbits $Gx$, $x\in X$, are discrete;
% 		\item[(iii)] the subset $\overline{S}_{2D_0}(x)$ is a generating set for $G$, for every $x\in X$.
% 	\end{itemize}
% \end{prop}
 \noindent The {\em translation length} of $g\in \text{Isom}(X)$ is by definition  $\ell(g) := \inf_{x\in X}d(x,gx).$ 
 When the infimum is realized, the isometry $g$ is called {\em elliptic} if $\ell(g) = 0$ and {\em hyperbolic} otherwise. The {\em free-systole} of a group $G < \textup{Isom}(X)$ {\em at a point} $x\in X$ is
 $$\text{sys}^\diamond(G,x) := \inf_{g\in G \setminus G^\diamond} d(x,gx),$$
 where $G^\diamond $ is the subset of all elliptic isometries of $G$. The {\em free-systole of} $G$ is accordingly defined as 
 $$\text{sys}^\diamond(G,X) = \inf_{x\in X}\text{sys}^\diamond(G,x).$$
 The next splitting theorem is due to P.E.Caprace and N.Monod.
 \begin{prop}
 	\label{prop-CM-decomposition}
 	Let $X$ be a proper, geodesically complete, $\CAT(0)$-space such that $\Is(X)$ is cocompact and unimodular. Then $X$ splits metrically as $M \times \R^k \times N$ in such a way that $\Is(X) = S \times \mathcal{E}_{k} \times D$, with $\Is(M) = S$, $\Is(\mathbb{R}^{k}) = \mathcal{E}_{k}$ and $\Is(N) = D$. Moreover
 	\begin{itemize}
 		\item[(a)] $S$ is an almost connected, semi-simple Lie group with no compact factor and trivial center, $M$ is a simply connected, non-positively curved, symmetric Riemannian manifold;
 		\item[(b)] $k = \Eucl(X)$;
 		\item[(c)] $D$ is locally compact, totally disconnected, unimodular and cocompact with codiameter at most the codiameter of $\Is(X)$.
 	\end{itemize}
 \end{prop}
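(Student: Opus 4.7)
The plan is to carry out the decomposition in two stages: first extract the Euclidean factor via the canonical de Rham splitting, then decompose what remains using the structure theory of locally compact groups acting on CAT$(0)$-spaces. For the first stage, I would invoke the canonical maximal Euclidean de Rham decomposition of a proper, geodesically complete CAT$(0)$-space: there is a unique metric splitting $X = Y \times \R^k$ with $k = \Eucl(X)$ and $Y$ admitting no further Euclidean factor. Because the splitting is canonical and $\R^k$ is maximal, every isometry of $X$ preserves it and decomposes as a product of isometries of the two factors (Foertsch--Lytchak splitting of isometries). This gives $\Is(X) = \Is(Y) \times \mathcal{E}_k$ as topological groups, and both cocompactness and unimodularity are inherited by $\Is(Y)$ since they hold for the $\mathcal{E}_k$-factor on $\R^k$.

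For the second stage, I would follow the strategy of Caprace--Monod in \cite{CM09b}. Write $H := \Is(Y)$, which is locally compact and $\sigma$-compact; by Gleason--Yamabe, its identity component $H^\circ$ is a Lie group. Using unimodularity of $H$, cocompactness, and the absence of Euclidean factors in $Y$, one shows that $H^\circ$ is semisimple with no compact factor and trivial center: a nontrivial solvable radical would force global fixed points at infinity and produce an equivariant Euclidean subfactor via Adams--Ballmann-type arguments, while a nontrivial compact normal subgroup or nontrivial center would yield additional parallel-set splittings contradicting the de Rham maximality. The $H^\circ$-orbits can then be identified with a symmetric space $M = H^\circ/K$ of non-compact type, where $K$ is a maximal compact subgroup, and each orbit is a closed convex subset of $Y$ isometric to $M$.

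The final step is to upgrade these orbits into a metric product factor of $Y$: since $H^\circ$ is normal in $H$ and acts cocompactly with no Euclidean factor, distinct $H^\circ$-orbits are pairwise parallel, so the de Rham / Foertsch--Lytchak splitting theorem applied to this convex foliation yields $Y = M \times N$ with $H = S \times D$, where $S = \Is(M)$ is almost connected with identity component $H^\circ$, and $D$ acts on $N$. Then $\Is(N) = D$ has trivial identity component (hence is totally disconnected), inherits unimodularity and cocompactness from $H$, and the codiameter bound follows because the product quotient $S \backslash M \times \mathcal{E}_k \backslash \R^k \times D \backslash N$ surjects onto each factor. Combined with the first stage, this yields the full decomposition $X = M \times \R^k \times N$ with $\Is(X) = S \times \mathcal{E}_k \times D$. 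The main obstacle is the orbit-to-factor promotion: it is the technical heart of \cite{CM09b} and requires the combined use of CAT$(0)$ rigidity, unimodularity (to rule out skew parallel transport), and the no-Euclidean-factor assumption (to pin down which parallel-set decompositions are forced).
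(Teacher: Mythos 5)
Your outline is really a sketch of how one would \emph{reprove} the Caprace--Monod decomposition, and as such it contains genuine gaps at exactly the points where the paper instead cites the literature. The most important one is that you never actually establish that the isometry group has no fixed point at infinity, which is the hypothesis that makes everything else work. In the paper's argument this is precisely where unimodularity enters: by \cite[Theorem M]{CM13-unimodularity}, for a cocompact \emph{unimodular} group the fixed points at infinity are confined to the boundary of the Euclidean factor, and since $\mathcal{E}_k$ fixes no point of $\partial\R^k$ one concludes that $\Is(X)$ has no fixed point at infinity at all. You invoke unimodularity only vaguely (``to rule out skew parallel transport'', to kill the solvable radical), but without the fixed-point-free statement neither the semisimplicity of $H^\circ$ nor the applicability of \cite[Theorem 1.6]{CM09b} is available. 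A second substantive gap is the assertion that the $H^\circ$-orbits are closed, convex and isometric to a symmetric space, and that they assemble into a metric factor: you correctly flag this as ``the technical heart of \cite{CM09b}'', but that means your argument at this point reduces to citing the very theorem you set out to reprove. Similarly, the claim that $D$ has trivial identity component is part of the conclusion of \cite[Addendum 1.8]{CM09b}, not something that follows from the preceding steps of your sketch.

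For comparison, the paper's proof is a short assembly: minimality of the $\Is(X)$-action follows from geodesic completeness and cocompactness \cite[Proposition 1.5]{CM09b}; absence of fixed points at infinity follows from unimodularity via \cite[Theorem M]{CM13-unimodularity}; the invariant splitting $X = M\times\R^k\times N$ with the stated properties is then \cite[Theorem 1.6 and Addendum 1.8]{CM09b}; the product decomposition of $\Is(X)$ follows from \cite[Proposition I.5.3.(4)]{BH09}; cocompactness and the codiameter bound for $D$ come from projecting, and unimodularity of $D$ from the fact that a product of locally compact groups is unimodular if and only if its factors are. Your two-stage order (Euclidean factor first, then the symmetric space) is a legitimate alternative organization, but to make it a proof you would either have to carry out the hard steps of \cite{CM09b} in full or, more reasonably, cite them --- in which case you should also cite the fixed-point-at-infinity input that your sketch currently omits.
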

 \begin{proof}
 	The action of $\Is(X)$ on $X$ is minimal, i.e. there are no proper, convex, closed, invariant subsets, since $X$ is geodesically complete and $\Is(X)$ is cocompact (cp. \cite[Proposition 1.5]{CM09b}). By \cite[Theorem M]{CM13-unimodularity} the fixed points at infinity of $\Is(X)$ are contained in the boundary of the Euclidean factor of $X$, by unimodularity. Therefore $\Is(X)$ has no fixed points at infinity since the isometries of the Euclidean factor have none. Hence \cite[Theorem 1.6 and Addendum 1.8]{CM09b} gives a canonical $\Is(X)$-invariant splitting of $X$ as $M \times \R^k \times N$, where $M$ and $S=\Is(M)$ are as in (a), $k=\Eucl(X)$ and $D=\Is(N)$ is totally disconnected. The fact that $\Is(X)$ splits as $S\times \mathcal{E}_k \times D$ follows for instance from \cite[Proposition I.5.3.(4)]{BH09} because the splitting is $\Is(X)$-invariant. $D$ is also cocompact with codiameter bounded by the codiameter of $\Is(X)$, being the projection of $\Is(X)$ on $\Is(N)$. 
 	%By \cite[Corollary B.1.8]{BdLHV08} if $G$ is a locally compact group and $H < G$ is a cocompact subgroup, i.e. the quotient space $G/H$ is compact, which is unimodular, then also $G$ is unimodular. In view of this principle the group $\mathcal{E}_k = \R^k \rtimes O(k)$ is unimodular because it contains the cocompact subgroup $\R^k$ which is unimodular because it is abelian. Also $S$ is unimodular because its connected component has finite index, so it is cocompact in $S$, and it is a connected semi-simple Lie group, so unimodular. 
 	By \cite[Corollary p.85]{Nac65} the product of locally compact groups is unimodular if and only if the factors are unimodular. Since $\Is(X)$ is unimodular we deduce that also $D$ is unimodular.
 \end{proof}

 \begin{obs}
 	\label{rmk-unimodularity-td-isom}
 	If $X$ is a space as in Proposition \ref{prop-CM-decomposition} then it is equivalent to require that $\Is(X)$ is cocompact and unimodular or that there exists $G<\Is(X)$ closed, totally disconnected, unimodular and cocompact. Indeed if $\Is(X)$ is unimodular and cocompact then the previous proposition applies and it is enough to take $G=\Gamma\times D$, where $\Gamma$ is a lattice in $S\times \mathcal{E}_k$. Notice however that it is not possible in general to find such $G$ with codiameter bounded in terms of the codiameter of $\Is(X)$. This follows by comparing the Example in the introduction with Theorem \ref{theo-intro-td}.\\ 	
 	Viceversa if there exists $G<\Is(X)$ which is closed, totally disconnected, unimodular and cocompact then $\Is(X)$ is unimodular because the quotient space $\Is(X)/G$ is compact and \cite[Corollary B.1.8]{BdLHV08} applies. The fact that $\Is(X)/G$ is compact can be proved as follows. Fix $x_0 \in X$, let $D$ be the codiameter of the action of $G$ on $X$ and define the compact set $\overline{S}_{D} = \lbrace h\in \Is(X) \text{ s.t. } d(x_0,hx_0)\leq D\rbrace$. For every $h\in \Is(X)$ we can find $g\in G$ such that $d(hx_0,gx_0)\leq D$, i.e. $g^{-1}h \in \overline{S}_D$. In other words $\Is(X) = G\cdot \overline{S}_D$, implying that $\Is(X)/G$ is compact.
 \end{obs}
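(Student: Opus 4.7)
The plan is to prove the stated equivalence by establishing the two implications separately, relying on Proposition \ref{prop-CM-decomposition} for the forward direction and on a general principle about unimodularity in compact quotients for the reverse direction.

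For the forward direction I assume $\Is(X)$ is cocompact and unimodular. I would apply Proposition \ref{prop-CM-decomposition} to obtain the splitting $X = M \times \R^k \times N$ together with $\Is(X) = S \times \mathcal{E}_k \times D$, where $D$ is already closed, totally disconnected, unimodular and cocompact. I would then set $G := \Gamma \times D$, where $\Gamma$ is a uniform lattice in the almost connected real Lie group $S \times \mathcal{E}_k$ (its existence is classical: combine a uniform lattice in the semisimple factor $S$ with a crystallographic lattice in $\mathcal{E}_k$). All the required properties of $G$ then pass to the product: discreteness of $\Gamma$ together with the corresponding properties of $D$ give closedness, total disconnectedness and unimodularity, while cocompactness of $\Gamma$ in $S \times \mathcal{E}_k$ together with cocompactness of $D$ on $N$ yields cocompactness of $G$ on $X$.

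For the reverse direction I assume such a $G$ is given. Cocompactness of $\Is(X)$ on $X$ is immediate from $G \subset \Is(X)$ and the cocompactness of $G$ on $X$. For unimodularity, the structural fact I would invoke is that a locally compact group $L$ admitting a closed unimodular subgroup $H$ with $L/H$ compact is itself unimodular (e.g.\ \cite[Corollary B.1.8]{BdLHV08}). The task thus reduces to showing that $\Is(X)/G$ is compact. To this end, fix $x_0 \in X$, let $D$ denote the codiameter of $G$, and consider $K := \{h \in \Is(X) : d(x_0, hx_0) \leq D\}$, which is compact in the compact-open topology by Arzela-Ascoli and properness of $X$. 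Given any $h \in \Is(X)$, cocompactness of $G$ on $X$ yields some $g \in G$ with $d(hx_0, gx_0) \leq D$, so that $g^{-1}h \in K$ and $\Is(X) = G \cdot K$, forcing $\Is(X)/G$ to be compact.

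The main subtle point is in the forward direction, where one must produce the uniform lattice $\Gamma < S \times \mathcal{E}_k$: this rests on a nontrivial classical result about existence of uniform lattices in almost connected semisimple Lie groups with trivial center and no compact factors, combined with a standard crystallographic construction in the Euclidean factor. The reverse direction, by contrast, is a short application of the unimodularity-quotient principle, once the compactness of $\Is(X)/G$ is secured by the Arzela-Ascoli argument above. It is worth noting that this equivalence is purely existential: as highlighted in the remark itself, one cannot in general control the codiameter of $G$ in terms of that of $\Is(X)$, so no quantitative refinement is to be expected from this approach.
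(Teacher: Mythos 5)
Your proposal is correct and follows essentially the same route as the paper's own argument: the forward direction via Proposition \ref{prop-CM-decomposition} with $G=\Gamma\times D$ for a uniform lattice $\Gamma<S\times\mathcal{E}_k$, and the reverse direction by showing $\Is(X)=G\cdot\overline{S}_D$ so that $\Is(X)/G$ is compact and \cite[Corollary B.1.8]{BdLHV08} applies. The only additions are your (correct) elaborations on why the uniform lattice $\Gamma$ exists and why the set $K$ is compact, which the paper leaves implicit.
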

  
 Let $X$ be a metric space and $r>0$.
 A subset $Y$ of $X$ is called {\em $r$-separated} if $d(y,y') > r$ for all $y,y'\in Y$.
 Given $x\in X$ and $0<r\leq R$ we denote by Pack$(\overline{B}(x,R), r)$ the maximal cardinality of a $2r$-separated subset of $\overline{B}(x,R)$. Moreover we denote by Pack$(R,r)$ the supremum of Pack$(\overline{B}(x,R), r)$ among all points of $X$. 	
 Given $P_0,r_0 > 0$ we say that $X$ is {\em $(P_0,r_0)$-packed} if Pack$(3r_0,r_0) \leq P_0$.
 The packing condition should be thought as a metric, weak replacement  of a Ricci curvature lower bound: for more details and examples see \cite{CavS20} and \cite{CavS20bis}. Moreover every metric space admitting a cocompact group of isometries is packed (for some $P_0, r_0$), see the proof of \cite[Lemma 5.4]{Cav21ter}. The packing condition has many interesting geometric consequences for complete, geodesically complete CAT$(0)$-spaces, as showed in \cite{CavS20bis}, \cite{Cav21bis} and  \cite{Cav21}.

 \subsection{Gromov-Hausdorff convergence}
 \label{subsec-basic-convergence}
 
 An {\em isometric action} on a pointed space is a triple  $(X,x,G)$ where $X$ is a proper metric space, $x \in X$ is a basepoint and $G < \text{Isom}(X)$ is a closed subgroup. An {\em equivariant isometry} between isometric  actions of pointed spaces  $(X,x,G)$ and $(Y,y,H)$ is an isometry $F\colon X \to Y$ such that
 \begin{itemize}
 	\item[--] $F(x)=y$;
 	\item[--] $F_*\colon \textup{Isom}(X) \to \textup{Isom}(Y)$ defined by $F_*(g) = F\circ g \circ F^{-1}$ is an isomorphism between $G$ and $H$.
 \end{itemize}
 \noindent The best known notion of convergence for isometric actions of pointed spaces is the  \emph{equivariant pointed Gromov-Hausdorff} convergence,  as defined by K. Fukaya in \cite{Fuk86}:
 we will write $(X_j,x_j,G_j) \underset{\textup{eq-pGH}}{\longrightarrow} (X_\infty,x_\infty,G_\infty)$  
 for a sequence $(X_j,x_j,G_j)$  of isometric actions converging in the  equivariant pointed Gromov-Hausdorff sense to an isometric action $(X_\infty,x_\infty,G_\infty)$. 
 
 \noindent Forgetting about the group actions and  considering just  pointed metric spaces $(X_j,x_j)$,  this convergence reduces to the \emph{pointed Gromov-Hausdorff} convergence: we will write $(X_j,x_j) \underset{\textup{pGH}}{\longrightarrow} (X_\infty,x_\infty)$ for a sequence of pointed metric spaces $(X_j,x_j)$ converging to the pointed metric space $(X_\infty, x_\infty)$  in this sense. 
 An equivalent approach uses ultralimits. We briefly recall it. A {\em non-principal ultrafilter} $\omega$ is a finitely additive measure on $\mathbb{N}$ such that $\omega(A) \in \lbrace 0,1 \rbrace$ for every $A\subseteq \mathbb{N}$ and $\omega(A)=0$ for every finite subset of $\mathbb{N}$. \linebreak Accordingly, we will write  {\em $\omega$-a.s.}  or {\em for $\omega$-a.e.$(j)$} in the usual measure theoretic sense.  
 Given a bounded sequence $(a_j)$ of real numbers and a non-principal ultrafilter $\omega$ there exists a unique $a\in \mathbb{R}$ such that for every $\varepsilon > 0$ the set $\lbrace j \in \mathbb{N} \text{ s.t. } \vert a_j - a \vert < \varepsilon\rbrace$ has $\omega$-measure $1$ (cp. \cite[Lemma 10.25]{DK18}). The real number $a$ is then called {\em the ultralimit of the sequence $a_j$} and it is denoted by $\omega$-$\lim a_j$.
 
 \noindent A non-principal ultrafilter  $\omega$ being given,   one defines the {\em ultralimit pointed metric space} 
 $(X_\omega, x_\omega)= \omega$-$\lim (X_j, x_j)$ of any  sequence of pointed metric spaces $(X_j, x_j)$: \\
 -- first, one  says that a sequence    $(y_j)$, where $y_j\in X_j$ for every $j$, is {\em admissible} if there exists $M$ such that $d(x_j,y_j)\leq M$ for $\omega$-a.e.$(j)$; \\
 -- then, one 
 defines $(X_\omega, x_\omega)$ as  set of admissible sequences $(y_j)$ modulo the relation $(y_j)\sim (y_j')$ if and only if $\omega$-$\lim d(y_j,y_j') = 0$. \\
 The point of $X_\omega$ defined by the class of the sequence $(y_j)$ is denoted by  $y_\omega = \omega$-$\lim y_j$. 
 Finally, the formula $d(\omega$-$\lim y_j, \omega$-$\lim y_j') = \omega$-$\lim d(y_j,y_j')$ defines a metric on $X_\omega$ which is called the ultralimit distance on $X_\omega$.
 
 \noindent Using a non-principal ultrafilter  $\omega$, one can also talk of limits of  isometries  and  of  isometry groups of pointed metric spaces. A sequence of isometries  $g_j $ of pointed metric spaces $(X_j, x_j)$ is {\em admissible} if there exists $M\geq 0$ such that $d(x_j, g_jx_j) \leq M$ $\omega$-a.s. Every such sequence defines a limit isometry $g_\omega = \omega$-$\lim g_j$ of $X_\omega=\omega$-$\lim (X_j, x_j)$  by the formula: $g_\omega y_\omega = \omega$-$\lim g_jy_j$  (\cite[Lemma 10.48]{DK18}).
 Given a sequence of groups $G_j < \text{Isom}(X_j)$ we set
 $$G_\omega = \lbrace \omega\text{-}\lim g_j \text{ s.t. } g_j \in G_j \text{ for } \omega\text{-a.e.}(j)\rbrace.$$
 In particular the elements of $G_\omega$ are ultralimits of admissible sequences. 
 One has a well-defined composition law on $G_\omega$ (\cite[Lemma 3.7]{Cav21ter}): if $ g_\omega   = \omega$-$\lim g_j$ and  $ h_\omega = \omega$-$\lim h_j$ we set $g_\omega \, \circ\, h_\omega := \omega\text{-}\lim(g_j \circ h_j).$
 With this operation $G_\omega$ becomes a group of isometries of $X_\omega$, which  we call  {\em the ultralimit group} of the sequence of groups $G_j$.  
 Notice that if $X_\omega$ is proper then $G_\omega$ is always a closed subgroup of isometries of $X_\omega$ \cite[Proposition 3.8]{Cav21ter}.
 
 \vspace{1mm}
 \noindent In conclusion,  a non-principal ultrafilter  $\omega$ being given, for any sequence of isometric actions on pointed  spaces $(X_j,x_j, G_j)$ there exists an {\em ultralimit isometric action} on a pointed space
 \vspace{-3mm}
 
 $$(X_\omega, x_\omega, G_\omega) =  \omega \text{-}\lim (X_j,x_j, G_j).$$ 
 
 \vspace{1mm}
 \noindent The ultralimit approach and the Gromov-Hausdorff convergence are essentially equivalent.
 
 \begin{prop}[\cite{Cav21ter}, Proposition 3.13 \& Corollary 3.14] 
 	\label{prop-GH-ultralimit} ${}$\\
 	Let   $(X_j, x_j, G_j)$ be a sequence of isometric actions of pointed spaces:
 	\begin{itemize} 
 		\item[(i)]   if $(X_j,x_j,G_j) \underset{\textup{eq-pGH}}{\longrightarrow} (X_\infty,x_\infty,G_\infty)$, then $(X_\omega, x_\omega, G_\omega) \cong (X_\infty,x_\infty,G_\infty)$  for every non-principal ultrafilter $\omega$;
 		\item[(ii)] reciprocally, if $\omega$  is a non-principal ultrafilter and   $(X_\omega, x_\omega)$ is proper, then    $(X_{j_k},x_{j_k},G_{j_k}) \underset{\textup{eq-pGH}}{\longrightarrow} (X_\omega,x_\omega,G_\omega)$ for some subsequence $\lbrace{j_k}\rbrace$.
 	\end{itemize}
 	Moreover,  if for every non-principal ultrafilter $\omega$ the ultralimit  $(X_\omega,x_\omega,G_\omega)$ is equivariantly isometric to the same isometric  action   $(X,x,G)$, with $X$ proper,  then $(X_j,x_j,G_j) \underset{\textup{eq-pGH}}{\longrightarrow} (X,x,G)$.
 \end{prop}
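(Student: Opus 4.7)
The plan is to exploit the fact that pointed Gromov-Hausdorff convergence provides, for each $j$, an $\varepsilon_j$-isometry $\psi_j\colon \overline{B}(x_j,1/\varepsilon_j) \to \overline{B}(x_\infty,1/\varepsilon_j)$ with $\psi_j(x_j) = x_\infty$ and $\varepsilon_j \to 0$; the equivariant version supplies, in addition, approximations of admissible sequences in $G_j$ by isometries in $G_\infty$ and vice versa, coherent with the $\psi_j$'s up to errors tending to $0$.

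For part (i), I would define $\Phi\colon X_\omega \to X_\infty$ by $\Phi(\omega\text{-}\lim y_j) := \omega\text{-}\lim \psi_j(y_j)$; the inner ultralimit exists in the proper space $X_\infty$ because the sequence $\psi_j(y_j)$ is bounded, and the properness of $X_\infty$ ensures that any bounded sequence has a unique $\omega$-limit point. Well-definedness, basepoint preservation and the isometry property are immediate from the fact that $\psi_j$ almost preserves distances on larger and larger balls; surjectivity uses the almost-surjectivity of $\psi_j$ on balls of radius $1/\varepsilon_j$. For equivariance, given $g_\omega = \omega\text{-}\lim g_j$ with $g_j \in G_j$, the equivariant GH condition yields isometries $h_j \in G_\infty$ with $d(\psi_j(g_jy),h_j\psi_j(y)) \to 0$ uniformly on bounded sets; setting $h_\omega := \omega\text{-}\lim h_j$ (admissible because $g_j$ is) one checks $\Phi \circ g_\omega = h_\omega \circ \Phi$, so $\Phi_* G_\omega \subseteq G_\infty$. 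The reverse inclusion is symmetric, using $\varepsilon_j$-isometries in the opposite direction.

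For part (ii), properness of $X_\omega$ forces, for every $R$, a uniform upper bound on the number of $\delta$-separated points in $\overline{B}(x_j,R)$: otherwise one could transport an infinite $\delta$-separated set to the limit ball, contradicting its compactness. Gromov precompactness then yields a subsequence $(X_{j_k},x_{j_k})$ converging in pointed GH sense to a proper limit $(Y,y)$; part (i) applied along this subsequence identifies $(Y,y)$ equivariantly isometrically with $(X_\omega,x_\omega)$. The group convergence on this subsequence follows by an Arzel\`a--Ascoli type argument: admissible sequences $g_{j_k} \in G_{j_k}$ have ultralimits in $G_\omega$ by construction, and conversely every $g_\omega \in G_\omega$ is the ultralimit of such a sequence, giving compatible approximations between the groups. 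The ``moreover'' statement follows by contradiction: if the whole sequence did not converge to $(X,x,G)$, a subsequence would stay at positive distance from it, yet part (ii) would extract a further subsequence converging to its ultralimit, which by hypothesis is $(X,x,G)$.

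The step I expect to be the main obstacle is the equivariance in part (i): a given admissible sequence $g_j$ admits many possible approximating sequences $h_j$ in $G_\infty$, all differing by errors that vanish but only along the approximations $\psi_j$, so one needs to verify that the induced map $G_\omega \to G_\infty$ is a well-defined group homomorphism (independent of the choice of $h_j$) and not merely a set-theoretic correspondence, then to promote the bijection at the level of orbits on bounded sets into a topological isomorphism of closed subgroups of $\textup{Isom}(X_\infty)$.
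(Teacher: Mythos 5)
First, a point of order: the paper contains no proof of this proposition --- it is imported wholesale from \cite{Cav21ter} (Proposition 3.13 and Corollary 3.14) --- so there is nothing internal to compare your argument with. On its own merits, your treatment of part (i) and of the final ``moreover'' clause is the standard argument and is essentially sound. One remark on the step you single out as the main obstacle: it is less delicate than you fear, because the definition of equivariant isometry used here is $\Phi_*(g)=\Phi\circ g\circ\Phi^{-1}$, and conjugation by a fixed isometry is automatically a topological group isomorphism $\Is(X_\omega)\to\Is(X_\infty)$; the only genuine content is the set equality $\Phi\, G_\omega\, \Phi^{-1}=G_\infty$, i.e.\ the two inclusions you already sketch (using that $G_\infty$ is closed and $X_\infty$ proper, so that the approximating isometries $h_j$ have an $\omega$-limit inside $G_\infty$).

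The genuine gap is in part (ii), in the sentence ``part (i) applied along this subsequence identifies $(Y,y)$ equivariantly isometrically with $(X_\omega,x_\omega)$.'' Part (i) applied to the extracted subsequence only tells you that $(Y,y)$ is the ultralimit of $(X_{j_k},x_{j_k})$ with respect to ultrafilters \emph{concentrated on the set} $\{j_k\}$; this coincides with $(X_\omega,x_\omega)$ only if $\{j_k\}$ has $\omega$-measure $1$, and the Gromov precompactness extraction gives you no control on that. Different ultrafilters can produce non-isometric ultralimits --- this is precisely why the ``moreover'' clause must quantify over \emph{all} ultrafilters --- so the identification cannot be made a posteriori. (Take $X_j$ a point for $j$ even and $X_j=\R$ for $j$ odd, with $\omega$ concentrated on the evens: a carelessly extracted subsequence converges to $\R\neq X_\omega$.) Note also that your separated-points argument only bounds the packing of $\overline{B}(x_j,R)$ for $\omega$-a.e.\ $j$, not for all $j$, so even the precompactness step must be run on $\omega$-large index sets. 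The repair is to build the subsequence so that it converges to $(X_\omega,x_\omega,G_\omega)$ \emph{directly}: for each $k$, a finite $\tfrac{1}{k}$-net of $\overline{B}(x_\omega,k)$ and of $\{g\in G_\omega : d(x_\omega,gx_\omega)\leq k\}\cdot x_\omega$ consists of ultralimits, hence for $\omega$-a.e.\ $j$ the finitely many relevant distances in $X_j$ are within $\tfrac{1}{k}$ of their limits and the net property persists; the set $A_k$ of such $j$ has $\omega$-measure $1$, so one may pick $j_k\in A_k$ increasing, and the resulting subsequence converges equivariantly to $(X_\omega,x_\omega,G_\omega)$ by construction. Your ``moreover'' argument is unaffected, since there the hypothesis covers whichever ultrafilter the sub-subsequence happens to determine.
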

 
 The sequence $(X_j,x_j, G_j)$   is called {\em $D$-cocompact} if  each $G_j$ is $D$-cocompact. 
 The ultralimit of a sequence of isometric  actions on pointed spaces does not depend on the choice of the basepoints, 
 provided that the actions have uniformly bounded codiameter (cp. \cite[Lemma 7.3]{CS23}). Therefore, when considering the convergence of uniformily cocompact isometric actions, 
 we will often omit the basepoints.
 \vspace{2mm}
 
 \noindent Given three parameters $P_0,r_0,D_0 > 0$ we denote by $\CATtdu$ the class of triples $(X,x,G)$ such that:
 \begin{itemize}
 	\item[-] $X$ is a proper, geodesically complete, $(P_0,r_0)$-packed, $\CAT(0)$-space;
 	\item[-] $x\in X$ is a basepoint;
 	\item[-] $G < \Is(X)$ is a closed, totally disconnected, unimodular, $D_0$-cocompact group.	
 \end{itemize}
 We also set $\textup{CAT}_0^{\textup{td-u}}(D_0) = \bigcup_{P_0,r_0 > 0} \CATtdu$.
 We can always assume to have a uniform packing, as soon as we have pointed Gromov-Hausdorff convergence.
 
 \begin{prop}[\textup{\cite[Proposition 7.5]{CS23}}]
 	\label{prop-GH-compactness-packing}
 	Suppose to have $X_j \in \textup{CAT}_0^{\textup{td-u}}(D_0)$ such that $X_j \underset{\textup{pGH}}{\longrightarrow} X_\infty$. Then $X_j \subseteq \CATtdu$ for every $j \in \N \cup \lbrace \infty \rbrace$, for some $P_0,r_0 > 0$. Moreover there exists $G_\infty < \Is(X_\infty)$ such that, up to passing to a subsequence, $(X_j,G_j) \underset{\textup{eq-pGH}}{\longrightarrow} (X_\infty, G_\infty)$.
 \end{prop}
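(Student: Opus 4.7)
The plan is in two stages: first secure a uniform packing constant covering both the sequence and the limit, then extract the limit group $G_\infty$ via ultralimits using Proposition \ref{prop-GH-ultralimit}.

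Stage 1 (uniform packing). Fix basepoints $x_j \in X_j$ realizing the pGH convergence. Since the limit is by definition proper, the closed ball $\overline{B}(x_\infty, 3 + D_0)$ is compact and thus admits a finite $2$-packing number $P$. By Gromov--Hausdorff convergence of bounded balls, $\overline{B}(x_j, 3 + D_0)$ has $2$-packing number at most $P+1$ for every $j \geq j_0$. The key step is to promote this estimate at the basepoint to every point of $X_j$: given $y \in X_j$, the $D_0$-cocompactness of $G_j$ yields $g \in G_j$ with $d(y, gx_j) \leq D_0$, hence $\overline{B}(y, 3) \subseteq g\cdot\overline{B}(x_j, 3 + D_0)$, and the bound is inherited by isometry. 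For the finitely many $j < j_0$ we invoke \cite[Lemma 5.4]{Cav21ter} individually and take the worst constants. This gives $(P_0, r_0) = (P+1, 1)$ (up to a harmless adjustment) valid for all $j \in \N \cup \{\infty\}$. Geodesic completeness of $X_\infty$ is inherited from the $X_j$ by the usual extension-of-geodesics argument in the ultralimit.

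Stage 2 (existence of $G_\infty$). Fix a non-principal ultrafilter $\omega$ and form the ultralimit group $G_\omega$ of the sequence $G_j < \Is(X_j)$ with respect to the basepoints $x_j$. Using Proposition \ref{prop-GH-ultralimit}(i), identify $X_\omega$ with $X_\infty$. Since $X_\infty$ is proper, $G_\omega$ is automatically a closed subgroup of $\Is(X_\infty)$ by \cite[Proposition 3.8]{Cav21ter}. We verify $G_\omega$ is $D_0$-cocompact by a direct lifting argument: every point $y_\omega = \omega\text{-}\lim y_j \in X_\omega$ arises from an admissible sequence, and $D_0$-cocompactness of each $G_j$ yields $g_j \in G_j$ with $d(g_j x_j, y_j) \leq D_0$; this sequence is admissible, and the ultralimit $g_\omega \in G_\omega$ satisfies $d(g_\omega x_\omega, y_\omega) \leq D_0$. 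Setting $G_\infty := G_\omega$ and applying Proposition \ref{prop-GH-ultralimit}(ii) to a subsequence $\{j_k\}$ yields the claimed equivariant pGH convergence $(X_{j_k}, G_{j_k}) \to (X_\infty, G_\infty)$.

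Main obstacle. The subtle point is not any single step but rather the interplay between the two: the compactness yielding uniform packing proceeds purely through pGH information at a basepoint, but that information must be promoted to a \emph{uniform} packing at every point of every $X_j$, and it is precisely the $D_0$-cocompactness that accomplishes this upgrade. I note also that the proposition does not assert that $G_\infty$ is totally disconnected or unimodular: as discussed in the introduction and in \cite{Cav23-GHTD}, the naïve ultralimit of totally disconnected groups may fail to be totally disconnected in the collapsed regime, which is a separate and more delicate issue than the one resolved here.
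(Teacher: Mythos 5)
The paper does not reprove this statement; it imports it from \cite[Proposition 7.5]{CS23}, and your two-stage argument (uniform packing from compactness of a limit ball plus homogeneity up to scale $D_0$, then the ultralimit construction of $G_\infty$ combined with Proposition \ref{prop-GH-ultralimit}(ii)) is exactly the strategy of that source, transported from discrete to closed totally disconnected groups --- correctly, since nothing in the argument uses discreteness. Two points in Stage 1 need repair, though both are harmless for a conclusion of the form ``for \emph{some} $P_0,r_0$''. First, the bound ``$P+1$'' is not justified: an $\epsilon$-approximation degrades a $2$-separated set in $\overline{B}(x_j,3+D_0)$ only to a $(2-\epsilon)$-separated set in a slightly larger ball of $X_\infty$, and the $(2-\epsilon)$-packing number need not converge to the $2$-packing number as $\epsilon\to 0$ (limit configurations may have pairwise distances exactly $2$); what you actually get is a bound by, say, $\mathrm{Pack}(\overline{B}(x_\infty,4+D_0),1/2)$, which is still finite by compactness, and that is all you need. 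Second, and more substantively, your promotion of the basepoint estimate to every point invokes the $D_0$-cocompactness of $G_j$, which is available only for finite $j$; as written you have not bounded the packing of $\overline{B}(y,3)$ for arbitrary $y\in X_\infty$. Either run Stage 2 first and promote on $X_\infty$ using the $D_0$-cocompact group $G_\omega$ constructed there, or observe directly that a pGH limit of uniformly packed geodesically complete spaces is packed with adjusted constants. A related scale issue affects the finitely many $j<j_0$: \cite[Lemma 5.4]{Cav21ter} produces constants at possibly different scales $r_0^{(j)}$, so ``taking the worst constants'' is not literally meaningful; it is simpler to repeat the compact-ball-plus-cocompactness argument for each such $j$ at scale $1$. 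Stage 2 is correct as written, and your closing remark --- that the proposition does not (and, in the collapsed regime, cannot) assert total disconnectedness or unimodularity of $G_\infty$ --- is the right reading of the statement and matches how it is used in the proofs of Theorems \ref{theo-intro-td} and \ref{theo-intro-rigidity-packed}.
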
 
 
 \begin{defin}[Standard setting of convergence]\label{defsetting}${}$\\
 	We say that we are in the \emph{standard setting of convergence} when we have a sequence 
 	$(X_j,G_j)$
 	in $ \textup{CAT}_0^{\textup{td-u}}(P_0,r_0,D_0)$ such that 
 	$(X_j,G_j) \underset{\textup{eq-pGH}}{\longrightarrow} (X_\infty,G_\infty)$.\\
 	The standard setting of convergence (or simply the sequence) is called:
 	\begin{itemize}
 		\item[-]  \emph{non-collapsed} if $\limsup_{j\to+\infty} \textup{sys}^\diamond(G_j,X_j) > 0$;  
 		\item[-] \emph{collapsed} if $\liminf_{j\to+\infty} \textup{sys}^\diamond(G_j,X_j) = 0$.
 	\end{itemize}
 \end{defin}
 
 Every collapsed sequence can be transformed in a non-collapsed one without changing the isometry type of the spaces.
 \begin{prop}[\textup{\cite[Theorem 5.1]{Cav23-GHTD}}]
 	\label{prop-always-noncollapsed}
 	Let $P_0,r_0,D_0 > 0$. Then there exists $\Delta_0 > 0$ such that the following holds true. Suppose $(X_j,G_j) \underset{\textup{eq-pGH}}{\longrightarrow} (X_\infty,G_\infty)$ with $(X_j,G_j)$
 	in $\textup{CAT}_0^{\textup{td-u}}(P_0,r_0,D_0)$. Then there exist $G_j'<\Is(X_j)$, $j\in \N \cup \lbrace \infty \rbrace$ such that $(X_j, G_j') \in \textup{CAT}_0^{\textup{td-u}}(P_0,r_0,\Delta_0)$ and \linebreak $(X_j,G_j') \underset{\textup{eq-pGH}}{\longrightarrow} (X_\infty,G_\infty')$ without collapsing.
 \end{prop}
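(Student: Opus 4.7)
The plan is to isolate the collapsing direction in the ultralimit via the Caprace--Monod structure theorem, and then replace it on each $X_j$ by a discrete analogue with controlled codiameter. Concretely, I would fix a non-principal ultrafilter $\omega$ and form $(X_\omega,G_\omega) = \omega\text{-}\lim(X_j,G_j)$; by Proposition \ref{prop-GH-ultralimit} this is equivariantly isometric to $(X_\infty,G_\infty)$, and $X_\omega$ is proper thanks to the uniform packing. Set $H := \overline{G_\omega} < \Is(X_\omega)$. The sequence is collapsed exactly when the identity component $H^0$ is non-trivial: an infinitesimal non-elliptic element of $G_j$ ultra-converges to a non-identity element of $H^0$, and conversely a non-trivial one-parameter subgroup of $H$ is approximated by short non-elliptic elements of $G_j$. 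This structural dichotomy drives the whole argument.

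Next I would apply the Caprace--Monod framework (as in Proposition \ref{prop-CM-decomposition}) to $\Is(X_\omega)$, which is cocompact and unimodular since $H$ is (cf.\ Remark \ref{rmk-unimodularity-td-isom}): geodesic completeness and cocompactness give minimality, unimodularity passes to the ultralimit, and the absence of non-Euclidean fixed points at infinity follows from unimodularity. This produces an $\Is(X_\omega)$-invariant splitting $X_\omega = M_\omega \times \R^{k} \times N_\omega$ with $\Is(X_\omega) = S \times \mathcal{E}_{k} \times D$, where $S$ is semi-simple of non-compact type, $\mathcal{E}_k$ absorbs the Euclidean part of $H^0$, and $D < \Is(N_\omega)$ is totally disconnected. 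The connected part $H^0$ embeds entirely in $S \times \mathcal{E}_k$.

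I would then transfer this picture back to each $X_j$ using the analogous Caprace--Monod splitting of $\Is(X_j)$, which is also cocompact and unimodular (again by Remark \ref{rmk-unimodularity-td-isom}), obtaining $X_j = M_j \times \R^{k_j} \times N_j$; a compactness argument on the class $\CAT_0^{\textup{td-u}}(D_0)$ ensures that the factors converge under $\omega$ to the limit factors. Now build $G_j'$ as the product of a cocompact uniform lattice in $\Is(M_j) = S_j$, the semidirect product of a cocompact lattice of translations in $\R^{k_j}$ with a finite subgroup of rotations, and the natural projection of $G_j$ onto $\Is(N_j)$. By construction $G_j'$ is closed, totally disconnected, unimodular and cocompact; its ultralimit has trivial identity component, so the new sequence converges equivariantly without collapsing to some $(X_\infty,G_\infty')$.

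The main obstacle I expect is the last quantitative step: choosing the discrete replacements in the $S_j$ and $\R^{k_j}$ factors so that a single constant $\Delta_0 = \Delta_0(P_0,r_0,D_0)$ bounds all their codiameters uniformly. The Euclidean rank $k_j$ is bounded in terms of the packing data, which handles the abelian factor. For the semi-simple factor one needs a uniform upper bound on the dimension of $S_j$ and on the covolume of a suitable uniform lattice inside it: this requires showing that the family of semi-simple symmetric factors that can appear in a space in $\CAT_0^{\textup{td-u}}(P_0,r_0,D_0)$ is precompact, and that each admits a uniform lattice whose codiameter is controlled purely by $(P_0,r_0,D_0)$. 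Once this quantitative input is in place, the verification that the modified sequence converges without collapsing, to a totally disconnected ultralimit group, is immediate from the construction.
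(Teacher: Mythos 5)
This proposition is not proved in the paper at all: it is imported verbatim from \cite[Theorem 5.1]{Cav23-GHTD}, so there is no in-paper argument to compare against. Judged on its own merits, your proposal has genuine gaps, and the one you yourself defer to the end is the crux rather than a quantitative afterthought. Your construction discards the given $G_j$ on the $M_j\times\R^{k_j}$ part and builds fresh lattices in $S_j$ and $\mathcal{E}_{k_j}$; but Remark \ref{rmk-unimodularity-td-isom}, read together with the Example in the introduction, says precisely that one cannot in general produce a closed, totally disconnected, cocompact subgroup whose codiameter is controlled by the codiameter of $\Is(X)$ --- the symmetric factor is exactly where this fails. Any uniform bound $\Delta_0(P_0,r_0,D_0)$ must therefore be extracted from the \emph{existence} of the original $D_0$-cocompact totally disconnected $G_j$, which your construction throws away; "precompactness of the family of semi-simple factors" is not established and is not a routine input. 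There are also unaddressed issues with the $D_j$-part: the projection of $G_j$ to $\Is(N_j)$ need not be closed or unimodular (taking $D_j=\Is(N_j)$ itself would be cleaner), and, more seriously, nothing in your argument prevents the free-systole of that part from going to $0$, i.e.\ the modified sequence could still collapse in the $N_j$-direction. Ruling this out is essentially the content of \cite[Theorems 6.21 \& 6.22]{Cav23-GHTD} (Proposition \ref{prop-limittd-iff-noncollapsed} here), which you also invoke implicitly when you assert the dichotomy ``collapsed iff $H^0\neq 1$'' and, in the converse direction, that short non-elliptic elements produce non-trivial limit one-parameter subgroups; that is a deep theorem of the cited paper, not an observation.

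The approach behind the actual result is structurally different and avoids your obstruction: collapsing of a totally disconnected unimodular cocompact group is always ``abelian'' in nature --- the small non-elliptic elements generate almost abelian, almost commensurated subgroups, which by Proposition \ref{prop-characterization-inequality} force $G_j$-invariant splittings $X_j=Y_j\times\R^{k_j}$ --- and one only replaces the action on the Euclidean factor by a unit lattice, keeping the rest of $G_j$ intact. No lattice in a semi-simple group is ever needed, which is why a uniform $\Delta_0$ is achievable there but not along your route.
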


 By the characterization of collapsing sequences given in \cite{Cav23-GHTD} we have the following.
 \begin{prop}[\textup{\cite[Theorems 6.21 \& 6.22]{Cav23-GHTD}}]
 	\label{prop-limittd-iff-noncollapsed}
 	In the standard setting of convergence we have that $G_\infty$ is totally disconnected if and only if the sequence is non-collapsed. Moreover if it is the case then it is also unimodular.
 \end{prop}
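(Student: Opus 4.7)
The plan is to handle the two directions of the equivalence separately and then derive unimodularity. Fix a non-principal ultrafilter $\omega$ and, by Proposition \ref{prop-GH-ultralimit}, identify $(X_\infty,G_\infty)$ with the ultralimit $(X_\omega,G_\omega)$.

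\emph{Collapsing implies that $G_\infty$ is not totally disconnected.} Passing to a subsequence, choose non-elliptic $g_j \in G_j$ with $\varepsilon_j := d(x_j,g_jx_j) \to 0$. For each fixed $t>0$ pick $n_j(t)\in\N$ with $n_j(t)\varepsilon_j \to t$; by convexity of the displacement function on a $\CAT(0)$-space the sequence $g_j^{n_j(t)}$ is admissible and its ultralimit $h_t$ satisfies $d(x_\omega,h_tx_\omega)\leq t$, while a quasi-axial lower bound (available because $g_j$ is non-elliptic) keeps $h_t \neq \id$ for $t>0$. A diagonal argument on $t$ produces a non-constant continuous path through $\id$ inside $G_\omega$, so $G_\omega$ has non-trivial identity component.

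\emph{Non-collapsing implies that $G_\infty$ is totally disconnected.} Assume $\textup{sys}^\diamond(G_j,X_j) \geq s_0 > 0$ for $\omega$-a.e.\ $j$. By van Dantzig's theorem each $G_j$ has a compact open subgroup, which I refine to a $K_j \subset G_j$ compact and open with $d(x_j,kx_j) < s_0/2$ for every $k \in K_j$. The systole bound forces each $k \in K_j$ to be elliptic, so $K_j$ fixes the circumcenter $p_j$ of its orbit $K_j\cdot x_j$. Uniform packing of $X_j$ and cocompactness of $G_j$ bound the action of $K_j$ on balls of fixed radius around $p_j$ independently of $j$, so $K_\omega := \omega\text{-}\lim K_j$ is a compact subgroup of $G_\omega$. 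The key is that $K_\omega$ is open in $G_\omega$: any admissible sequence $g_j \in G_j$ whose displacement at $x_j$ is eventually below a suitable threshold is elliptic, fixes a point near $p_j$, and, by the maximality in the choice of $K_j$, lies $\omega$-a.s. in $K_j$. Hence $G_\omega$ has a compact open subgroup at the identity, the standard characterization of total disconnection for locally compact groups.

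\emph{Unimodularity.} Assuming $G_\infty$ is totally disconnected, and using $K_\infty$ and $K_j$ as above, the left Haar measure on $G_\infty$ is normalized by $K_\infty$, and unimodularity reduces to the equality of indices $[K_\infty:K_\infty\cap gK_\infty g^{-1}] = [K_\infty:K_\infty\cap g^{-1}K_\infty g]$ for every $g \in G_\infty$. The corresponding equalities hold in each $G_j$ by unimodularity, and one identifies $K_\omega$-double cosets with ultralimits of $K_j$-double cosets, using the uniform packing to keep the relevant indices bounded in $j$ so that the finite-stage equalities pass to the ultralimit. The main obstacle I expect is precisely the openness of $K_\omega$ in the second step: one must rule out that a sequence $g_j \in G_j\setminus K_j$ with $d(x_j,g_jx_j)\to 0$ limits to a non-trivial element lying in every compact-open neighborhood of the identity of $G_\omega$, for instance a small rotation converging to a continuous rotation in a latent Euclidean factor of $X_\omega$. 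This rigidity is exactly what non-collapsing is designed to provide and is the heart of the statement.
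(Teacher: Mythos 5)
First, a point of comparison: the paper gives no proof of Proposition \ref{prop-limittd-iff-noncollapsed} at all --- it is imported verbatim from \cite[Theorems 6.21 \& 6.22]{Cav23-GHTD}, and the surrounding text makes clear that this is regarded as a deep external input (compare the remark before Lemma \ref{lemma-key-elliptic-limit}, where the author says a direct proof of even a consequence of this machinery ``is out of reach''). So your attempt is not reproducing an argument from this paper; it is sketching a proof of the main theorem of another one, and it has genuine gaps at exactly the hard points.

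Two concrete problems. (1) Your final step in the non-collapsed direction is wrong as stated: a locally compact group possessing a compact open subgroup need \emph{not} be totally disconnected (any compact connected group is open in itself). Total disconnectedness of a locally compact group is equivalent to having a \emph{neighbourhood basis} of the identity consisting of compact open subgroups, equivalently to triviality of the identity component; producing one compact open $K_\omega$ only shows the identity component is compact, and you would still have to rule out a positive-dimensional compact connected piece. (2) The openness of $K_\omega$ rests on the sentence ``by the maximality in the choice of $K_j$, lies $\omega$-a.s.\ in $K_j$'' --- but you obtained $K_j$ by shrinking a van Dantzig subgroup, which gives no maximality whatsoever. What you actually need is that the subgroup of $G_j$ generated by \emph{all} elements of displacement $\leq\sigma$ at a point is compact, uniformly in $j$; that is a Margulis-lemma--type statement and is precisely the content of Proposition \ref{prop-non-collapsed-basepoints}, which this paper also imports from \cite{Cav23-GHTD} rather than proves. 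In other words, the heart of the theorem is being assumed. The collapsed direction is closer to salvageable, but note that $d(x_j,g_jx_j)=\varepsilon_j$ does not bound $\ell(g_j)$ from below, so your ``quasi-axial lower bound'' keeping $h_t\neq\id$ requires choosing basepoints on the axes (and hence semisimplicity of the $g_j$, another nontrivial input in this setting), and continuity of $t\mapsto h_t$ in the compact-open topology needs uniform control on compact sets, not just at one point. The unimodularity paragraph is a plausible outline but again leans on the unestablished openness of $K_\omega$.
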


Another important feature of non-collapsed sequences is the compactness of subgroups generated by small isometries.
\begin{prop}[\textup{\cite[Corollary 6.13 and §6.4]{Cav23-GHTD}}]
	\label{prop-non-collapsed-basepoints}
	In the non-collapsed standard setting of convergence there exists $\sigma > 0$ such that for every $y_j \in X_j$ the group generated by $\lbrace g_j \in G_j \text{ s.t. } d(y_j,g_jy_j) \leq \sigma \rbrace$ is compact.
\end{prop}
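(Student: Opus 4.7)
My plan is to argue by contradiction via the ultralimit machinery of Proposition~\ref{prop-GH-ultralimit}. Assuming the conclusion fails, there exist indices $j_n$, points $y_n \in X_{j_n}$ and scales $\sigma_n \downarrow 0$ such that $H_n := \langle S_n \rangle$ is not compact, where $S_n := \{ g \in G_{j_n} : d(y_n, g y_n) \leq \sigma_n \}$. Since $G_{j_n}$ acts properly on $X_{j_n}$, non-compactness of $H_n$ amounts to unboundedness of $H_n y_n$, and by writing elements of $H_n$ as products of $S_n$-generators with step $\leq \sigma_n$ I can produce $h_n \in H_n$ with $d(y_n, h_n y_n) \in [1,2]$ realized by an $S_n$-word $h_n = g_n^{(1)} \cdots g_n^{(N_n)}$ whose partial products sweep along a $\sigma_n$-dense polygonal arc from $y_n$ to $h_n y_n$ (choosing, for instance, a word of near-minimal length, so $N_n \asymp 1/\sigma_n$). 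After translating so that $d(x_{j_n}, y_n) \leq D_0$ via $D_0$-cocompactness, I fix a non-principal ultrafilter $\omega$ and invoke Proposition~\ref{prop-GH-ultralimit} to identify $(X_\omega, x_\omega, G_\omega)$ with $(X_\infty, x_\infty, G_\infty)$; Proposition~\ref{prop-limittd-iff-noncollapsed} ensures $G_\infty$ is totally disconnected. The admissible sequence $h_n$ yields $h_\omega := \omega\text{-}\lim h_n \in G_\infty$ with $d(y_\omega, h_\omega y_\omega) \in [1,2]$, so $h_\omega \neq \id$.

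The next step is to interpolate from $\id$ to $h_\omega$ by a continuous family inside the orbit $G_\infty y_\omega$. Writing $h_n^{(k)} := g_n^{(1)} \cdots g_n^{(k)}$, for $t \in [0,1]$ I set $s_n(t) := \min\{ k : d(y_n, h_n^{(k)} y_n) \geq t \cdot d(y_n, h_n y_n) \}$ and define $\tilde h_n^t := h_n^{(s_n(t))}$. Consecutive partial orbits satisfy $d(h_n^{(k-1)} y_n, h_n^{(k)} y_n) \leq \sigma_n$, so $p_n^t := \tilde h_n^t y_n$ satisfies $d(p_n^s, p_n^t) \leq |s-t| \cdot d(y_n, h_n y_n) + O(\sigma_n)$, and the ultralimits $p_\omega^t$ assemble into a continuous, non-constant map $[0,1] \to G_\infty y_\omega \subset X_\infty$ from $y_\omega$ to $h_\omega y_\omega$. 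By non-collapsing, one can take $\sigma_n < \sigma_0$ where $\sigma_0 > 0$ is a uniform lower bound on $\text{sys}^\diamond(G_{j_n}, X_{j_n})$ along the relevant subsequence; then every $g_n^{(i)}$ is elliptic, and the curve lies genuinely inside the orbit rather than along a hyperbolic axis.

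The final step is to convert the non-trivial arc $\{p_\omega^t\}$ inside $G_\infty y_\omega$ into a non-trivial connected subset of $G_\infty$ itself, contradicting total disconnection; I expect this to be the main obstacle. A direct attempt would be to promote $\tilde h_\omega^t := \omega\text{-}\lim \tilde h_n^t$ to a continuous path in $G_\infty$ for the compact-open topology, but small displacement of $(\tilde h_\omega^s)^{-1} \tilde h_\omega^t$ at the single point $y_\omega$ does not by itself bound displacement on compact subsets of $X_\infty$, so one needs a bootstrap exploiting the convexity of CAT$(0)$ displacement functions together with the uniform $(P_0, r_0)$-packing to obtain equicontinuity of the family $\{\tilde h_\omega^t\}$. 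The alternative route, closer to the spirit of \cite{Cav23-GHTD}, is to invoke van Dantzig's theorem to extract compact open subgroups $U \subset G_\infty$ arbitrarily close to the identity, show via cocompactness and Proposition~\ref{prop-always-noncollapsed} that such $U$ must interact rigidly with the orbit $G_\infty y_\omega$ (so cannot accommodate the arc produced above), and derive the contradiction directly from the stabilizer system of points along $\{p_\omega^t\}$. The crucial input in either route is non-collapsing, without which the arc could be absorbed into a genuine Euclidean factor of $G_\infty$ and the argument would break down.
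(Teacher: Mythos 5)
First, note that the paper does not prove this proposition at all: it is imported verbatim from \cite[Corollary 6.13 and \S 6.4]{Cav23-GHTD}, so there is no internal argument to compare against; your attempt has to stand on its own. As written it does not close, and you have in fact flagged the decisive step yourself (``I expect this to be the main obstacle''), which already means the proposal is a strategy sketch rather than a proof. But the gaps occur earlier than where you locate them. The reparametrization $\tilde h_n^t := h_n^{(s_n(t))}$ with $s_n(t)$ the \emph{first} index at which $d(y_n, h_n^{(k)}y_n)$ exceeds $t\cdot d(y_n,h_ny_n)$ does not satisfy $d(p_n^s,p_n^t)\leq |s-t|\,d(y_n,h_ny_n)+O(\sigma_n)$: two partial products at comparable distances from $y_n$ can be nearly antipodal on the corresponding sphere, since the word can backtrack (go out to distance $1$ in one direction, return to $y_n$, then go out in another direction); controlling $d(y_n,p_n^t)$ says nothing about $d(p_n^s,p_n^t)$. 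The fallback, parametrizing by word length, requires $N_n\sigma_n=O(1)$, and your assertion that a near-minimal word has $N_n\asymp 1/\sigma_n$ is exactly the kind of quantitative short-generation statement for almost-stabilizers that constitutes the real content of \cite[\S 6]{Cav23-GHTD}; it does not follow from minimality of the word, because the word metric of $\langle S_n\rangle$ with respect to $S_n$ need not be comparable to the orbit metric (no Milnor--\v{S}varc argument applies: $H_n$ does not act cocompactly on anything you have exhibited). There is also a smaller imprecision at the start: non-compactness of $H_n$ is not equivalent to unboundedness of $H_ny_n$, since a bounded subgroup can fail to be closed; what properness gives is that $\overline{H_n}$ is compact iff the orbit is bounded.

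On the final step, your worry is actually misplaced, and the fix you gesture at is not needed: if $G_\infty$ is totally disconnected then so is the orbit $G_\infty y_\omega$, because the action is proper, so $G_\infty y_\omega$ is homeomorphic to $G_\infty/\mathrm{Stab}(y_\omega)$, and van Dantzig's theorem gives compact open subgroups of $G_\infty$ whose orbits form a basis of compact open neighbourhoods in the quotient. Hence a nonconstant connected subset of the orbit alone would already contradict Proposition \ref{prop-limittd-iff-noncollapsed}; no promotion of the arc to a path in $G_\infty$, and no equicontinuity bootstrap, is required. (A cleaner way to produce such a connected set, avoiding any Lipschitz parametrization, is to take the ultralimit of the finite sets $\{h_n^{(k)}y_n\}_{0\leq k\leq N_n}$, which is a compact, $\varepsilon$-chained-for-every-$\varepsilon$, hence connected, subset of the orbit containing points at distance $\geq 1$.) But even with that repair the argument still hinges on producing words whose partial orbits stay in a fixed bounded set while $\sigma_n\to 0$, i.e.\ on the unproved claim about $N_n$, so the proposal has a genuine gap and cannot be accepted as a proof of the cited result.
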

 
We end this section by explicitating the key lemma of the convergence theory developed in \cite{CS23} and \cite{Cav23-GHTD}. It is a deep result, despite its proof is a straightforward consequence of \cite{Cav23-GHTD}. A direct proof of it without the machinery developed there is out of reach for the author.
\begin{lemma}
	\label{lemma-key-elliptic-limit}
	Suppose to be in the standard setting of convergence. Let $g_j \in G_j$ be a sequence of admissible isometries defining the limit isometry $g_\infty \in G_\infty$. If $g_j$ is elliptic for every $j$ then $g_\infty$ is elliptic.
\end{lemma}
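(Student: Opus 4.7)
The goal is to show that $g_\infty$ fixes some point of $X_\infty$, which in a proper geodesically complete $\CAT(0)$-space is equivalent to being elliptic. The plan is to reduce to the non-collapsed setting and then to locate fixed points of the $g_j$ at uniformly bounded distance from the basepoints, so that a limit fixed point exists in $X_\infty$.

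I would first reduce to the non-collapsed case by applying Proposition \ref{prop-always-noncollapsed}: this replaces each $G_j$ by a group $G_j'$ making the sequence non-collapsed, without altering the spaces $X_j$ or their ultralimit. Since the construction of \cite{Cav23-GHTD} enlarges the groups ($G_j \subseteq G_j'$), the element $g_j$ remains elliptic as an isometry of $X_j$ and its ultralimit is still $g_\infty$. Under this reduction, Proposition \ref{prop-limittd-iff-noncollapsed} guarantees that $G_\infty$ is totally disconnected and unimodular, and Proposition \ref{prop-non-collapsed-basepoints} supplies a uniform threshold $\sigma > 0$.

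The main step, which I expect to be the principal obstacle, is to select fixed points $p_j$ of $g_j$ such that
\[
d(x_j, p_j) \leq C = C\bigl(M, P_0, r_0, D_0, \sigma\bigr), \qquad M := \sup_j d(x_j, g_j x_j).
\]
Without the totally disconnected hypothesis this bound is false: small rotations of $\mathbb{R}^2$ around a far-away point produce elliptic isometries with bounded displacement at the origin and fixed set escaping to infinity. Total disconnectedness combined with non-collapsing rules out this pathology, intuitively because the rotation angles of elliptic elements of $G_j$ are quantized away from zero: a small displacement at the basepoint then forces the fixed set to lie nearby. Making this quantitative is exactly the content of the structural analysis of limits of totally disconnected groups carried out in \cite{Cav23-GHTD}, and is the reason the author's assertion that no direct proof is available.

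Granting the bound, the sequence $(p_j)$ is admissible, so its ultralimit $p_\infty = \omega\text{-}\lim p_j$ is a well-defined point of $X_\infty$. By continuity of the ultralimit action,
\[
g_\infty p_\infty \;=\; \omega\text{-}\lim (g_j p_j) \;=\; \omega\text{-}\lim p_j \;=\; p_\infty,
\]
so $g_\infty$ fixes $p_\infty$ and is therefore elliptic.
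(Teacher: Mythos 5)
Your proposal is correct and follows essentially the same route as the paper: the paper takes $y_j$ to be the projection of $x_j$ onto $\text{Fix}(g_j)$ and invokes \cite[Corollary 6.17.(ii)]{Cav23-GHTD} for the uniform bound $d(x_j,y_j)\leq N_0\cdot M$ with $N_0=N_0(P_0,r_0,D_0)$, then passes to the ultralimit exactly as you do. The only differences are cosmetic: your preliminary reduction to the non-collapsed case is unnecessary (the cited bound holds in the general standard setting of convergence), and the uniform fixed-point estimate you correctly single out as the crux is precisely that corollary of \cite{Cav23-GHTD}.
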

\begin{proof}
	By Proposition \ref{prop-GH-ultralimit} we can fix a non-principal ultrafilter $\omega$ and work with the ultralimit of the sequence $(X_j,x_j,G_j)$.
	The admissible condition means that $d(g_jx_j,x_j) \leq M$ for $\omega$-a.e.$(j)$ for some finite $M$. Let $y_j$ be the projection of $x_j$ on $\text{Fix}(g_j)$. By \cite[Corollary 6.17.(ii)]{Cav23-GHTD} we have $d(x_j,y_j) \leq N_0\cdot M$ for $\omega$-a.e.$(j)$, where $N_0$ is a constant depending only on $P_0,r_0$ and $D_0$. This means that the sequence $(y_j)$ is admissible and defines a limit point $y_\omega$ which is fixed by $g_\omega$, so $g_\omega$ is elliptic.
\end{proof}
 
 \subsection{Almost abelian groups}
 \label{subsec-basic-almost-abelian}
 In this part we recall the definitions of almost abelian and almost commensurated subgroup, as introduced in \cite{Cav23-GHTD}.
 \begin{defin}
 	A locally compact group $G$ is \emph{almost abelian} if there exists a compact, open, normal subgroup $N \triangleleft G$ such that $G/N$ is discrete, finitely generated and virtually abelian. 
 \end{defin}
 For instance if a locally compact group $G$ has a continuous surjective homomorphism $G \to \mathbb{Z}^k$ with compact kernel then it is almost abelian.
 Every almost abelian group has a well defined rank.
 \begin{defin}[\textup{\cite[Lemma 3.2]{Cav23-GHTD}}] Let $G$ be an almost abelian group. The rank of $G$, denoted by $\textup{rk}(G)$, is the unique integer $k\in \mathbb{N}$ such that the group $G/N$ is discrete, finitely generated and virtually abelian of rank $k$, for every compact, open, normal subgroup $N \triangleleft G$.
 \end{defin}
 
 \noindent We now present an example of almost abelian groups. We will use this criterion in the proof of Theorem \ref{theo-intro-stability-abelian-rank}.
 \begin{lemma}
 	\label{lemma-almost-commutator}
 	Let $A$ be a locally compact, compactly generated, totally disconnected group such that $\overline{[A,A]}$ is compact. Then $A$ is almost abelian.
 \end{lemma}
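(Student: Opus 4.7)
The plan is to reduce the problem to the classical structure theorem for compactly generated locally compact abelian groups. I would first set $K = \overline{[A,A]}$. Being the closure of a characteristic subgroup, $K$ is characteristic, hence normal in $A$, and it is compact by hypothesis. The quotient $A/K$ is then abelian, locally compact, compactly generated (as a quotient of a compactly generated group), and totally disconnected: the last property follows because $A$, being locally compact and totally disconnected, admits a basis of neighborhoods of the identity formed by compact open subgroups, and the images of these under the quotient map form such a basis in $A/K$.

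Next I would invoke the structure theorem for compactly generated LCA groups: any such group is topologically isomorphic to $\R^a \times \Z^b \times C$ for some non-negative integers $a,b$ and some compact abelian group $C$. Since $A/K$ is totally disconnected, the factor $\R^a$ must be trivial, so $A/K \cong \Z^b \times C$. In particular, $C$ is a compact open subgroup of $A/K$, and $(A/K)/C \cong \Z^b$.

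Now let $\pi: A \to A/K$ be the quotient map and set $N = \pi^{-1}(C)$. Because $A/K$ is abelian, $C$ is automatically normal in $A/K$, so $N$ is normal in $A$. It is open because $C$ is open in $A/K$ and $\pi$ is continuous, and it is compact because it sits in a short exact sequence $1 \to K \to N \to C \to 1$ with both $K$ and $C$ compact. Finally $A/N \cong (A/K)/C \cong \Z^b$ is discrete, finitely generated, and abelian, so a fortiori virtually abelian. Hence $N$ witnesses that $A$ is almost abelian, of rank $b$.

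There is no real obstacle in this strategy: the only substantive input is the structure theorem for compactly generated LCA groups, while all remaining steps are routine manipulations with quotients, relying crucially on the fact that in the abelian quotient $A/K$ every subgroup is automatically normal, so the normality of $N$ in $A$ comes for free.
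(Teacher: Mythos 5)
Your proof is correct and follows essentially the same route as the paper: pass to the abelian quotient $A/\overline{[A,A]}$, apply the structure theory of compactly generated locally compact abelian groups to write it as $\Z^b\times C$ with $C$ compact open, and pull $C$ back to the required compact open normal subgroup. The only cosmetic difference is that the paper cites the classification of totally disconnected compactly generated LCA groups directly, whereas you start from the general form $\R^a\times\Z^b\times C$ and discard $\R^a$ by total disconnectedness.
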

\begin{proof}
	The group $\overline{[A,A]}$ is compact and normal in $A$, so $A/\overline{[A,A]}$ is a locally compact, compactly generated, totally disconnected, abelian group. By the classification of such groups (cp. \cite[Theorem 24, p.85]{Mor77}) it is topologically isomorphic to $\mathbb{Z}^k \times K$ for some $k\geq 0$ and $K$ compact. The subgroup $\lbrace \id \rbrace \times K$ is open, compact and normal in $A/\overline{[A,A]}$. Let $\hat{K}$ be the preimage of $K$ under the projection map $\pi:A\to A/\overline{[A,A]}$. Then $\hat{K}$ is open in $A$ because $\pi$ is continuous. It is also compact because the kernel of $\pi$ is compact. Finally it is normal in $A$. Since $A/\hat{K}$ is topologically isomorphic to $\mathbb{Z}^k$, then $A$ is almost abelian.
\end{proof}
 
 We recall that a subgroup $H$ of a topological group $G$ is cocompact if the space $G/H$ is compact when endowed with the quotient topology.
 Let $G$ be a topological group and let $H < G$ be a closed subgroup. $H$ is said to be \emph{almost commensurated} if the group $H\cap gHg^{-1}$ is cocompact in both $H$ and $gHg^{-1}$, for all $g\in G$. We recall the following fact.
 \begin{lemma}[\textup{\cite[Lemma 4.2]{Cav23-GHTD}}]
 	\label{lemma-commensurability-generators}
 	Let $G$ be a topological group generated by a subset $S$. Let $H < G$ be a closed subgroup. Suppose $H\cap sHs^{-1}$ is cocompact in both $H$ and  $sHs^{-1}$ for every $s\in S$. Then $H$ is almost commensurated. 
 \end{lemma}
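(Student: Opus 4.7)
I would prove the lemma by showing that the subset
\[
T := \{g \in G \,:\, H \cap gHg^{-1} \text{ is cocompact in both } H \text{ and } gHg^{-1}\}
\]
is actually a subgroup of $G$. Since $T$ contains $S$ by hypothesis and $S$ generates $G$, this forces $T = G$, which is exactly the statement that $H$ is almost commensurated. The identity clearly belongs to $T$. Moreover $T$ is stable under inversion: if $g \in T$, conjugating the inclusions $H \cap gHg^{-1} \subseteq H$ and $H \cap gHg^{-1} \subseteq gHg^{-1}$ by $g^{-1}$ shows that $g^{-1}Hg \cap H$ is cocompact both in $g^{-1}Hg$ and in $H$, so $g^{-1} \in T$.

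The crux of the argument is closure under multiplication: given $g, h \in T$, one must prove $gh \in T$. Write $(gh)H(gh)^{-1} = g\tilde{H} g^{-1}$ with $\tilde{H} := hHh^{-1}$. From $h \in T$, conjugating by $g$, the subgroup $gHg^{-1} \cap g\tilde{H} g^{-1} = g(H \cap \tilde{H})g^{-1}$ is cocompact in both $gHg^{-1}$ and $g\tilde{H} g^{-1}$. From $g \in T$, $H \cap gHg^{-1}$ is cocompact in both $H$ and $gHg^{-1}$. The natural candidate for a common cocompact subgroup of $H$ and $g\tilde{H} g^{-1}$ is
\[
L := H \cap gHg^{-1} \cap g\tilde{H} g^{-1}.
\]
If one shows that $L$ is cocompact in $H$ and in $g\tilde{H} g^{-1}$, then $H \cap g\tilde{H} g^{-1} \supseteq L$ is automatically cocompact in both (since a closed subgroup containing a cocompact subgroup is cocompact), yielding $gh \in T$.

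The main obstacle is precisely the cocompactness of $L$. Working inside the topological group $gHg^{-1}$, both $H \cap gHg^{-1}$ and $gHg^{-1} \cap g\tilde{H} g^{-1}$ are cocompact closed subgroups, and $L$ is their intersection; in a general topological group the intersection of two cocompact closed subgroups need not be cocompact, so additional structure must be exploited. I would reduce the claim to showing that the natural injection
\[
(H \cap gHg^{-1})/L \;\hookrightarrow\; gHg^{-1}/\bigl(gHg^{-1} \cap g\tilde{H} g^{-1}\bigr)
\]
(whose target is compact) has closed image, equivalently that the product $(H \cap gHg^{-1}) \cdot (gHg^{-1} \cap g\tilde{H} g^{-1})$ is closed in $gHg^{-1}$. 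This closure statement is where most of the work lies; the plan is to pick compact transversals $C_1, C_2$ with $gHg^{-1} = C_1 \cdot (H \cap gHg^{-1}) = C_2 \cdot (gHg^{-1} \cap g\tilde{H} g^{-1})$ provided by cocompactness and combine them to control the product, possibly exploiting further hypotheses naturally available in the applications (e.g.\ local compactness of $G$, which is the setting of the isometry groups considered in this paper).
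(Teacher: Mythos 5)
You should first note that this paper does not prove Lemma \ref{lemma-commensurability-generators}: it is quoted verbatim from \cite[Lemma 4.2]{Cav23-GHTD}, so there is no in-paper proof to compare against. Your skeleton (show that the set $T$ of elements $g$ with $H\cap gHg^{-1}$ cocompact in both $H$ and $gHg^{-1}$ is a subgroup; identity and inversion are immediate; everything hinges on closure under products) is certainly the natural one. But the product step is not a technicality you can defer: it is the entire content of the lemma, and you have not proved it. What you need is exactly the transitivity of the relation ``$H_1\cap H_2$ is cocompact in both $H_1$ and $H_2$'' along the chain $H$, $gHg^{-1}$, $g(hHh^{-1})g^{-1}$, i.e.\ that the intersection of two closed cocompact subgroups of $gHg^{-1}$ is again cocompact. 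You correctly observe that this fails for general topological groups and then hope that local compactness will rescue it; it will not, since already in $\mathbb{R}^2$ the lattices $\mathbb{Z}^2$ and $\beta\mathbb{Z}^2$ ($\beta$ irrational) are closed, cocompact, and intersect trivially.

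In fact the obstruction is fatal for the statement in the generality in which it is reproduced here. Take $\Sigma=\langle \sigma, d\rangle$ with the discrete topology, where $\sigma$ is the coordinate swap of $\mathbb{R}^2$ and $d=\mathrm{diag}(1,\beta)$ with $\beta$ irrational, and let $G=\mathbb{R}^2\rtimes\Sigma$, generated by $S=\mathbb{R}^2\cup\{\sigma,d,d^{-1}\}$. Let $H=\mathbb{Z}\times\mathbb{R}$, a closed subgroup of $G$. Translations and $d^{\pm 1}$ normalize $H$, while $\sigma H\sigma^{-1}=\mathbb{R}\times\mathbb{Z}$ meets $H$ in $\mathbb{Z}^2$, which is cocompact in both; so the hypothesis holds for every $s\in S$. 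Yet $g=\sigma d\sigma$ acts as $\mathrm{diag}(\beta,1)$, so $gHg^{-1}=\beta\mathbb{Z}\times\mathbb{R}$ and $H\cap gHg^{-1}=\{0\}\times\mathbb{R}$, whose quotient in $H$ is the non-compact discrete group $\mathbb{Z}$. So no manipulation of compact transversals can close your gap at this level of generality: the proof must use the standing hypotheses of \cite{Cav23-GHTD} (there $G$ is totally disconnected and locally compact and $H$ is almost abelian, so that a closed subgroup is cocompact if and only if it is ``finite index modulo a compact open normal subgroup'', equivalently of full rank; for such subgroups the intersection of two cocompact closed subgroups is again cocompact by the usual finite-index argument, and your induction on word length then goes through). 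I recommend you either import those hypotheses explicitly and prove the transitivity statement for that class of subgroups, or consult the proof of \cite[Lemma 4.2]{Cav23-GHTD} directly; as written, your argument establishes only the easy reductions and leaves the one substantive point unproved.
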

 Almost abelian almost commensurated subgroups of isometries of CAT$(0)$-groups induce a splitting of the space in the following sense.
 \begin{prop}[\textup{\cite[Proposition 4.5]{Cav23-GHTD}}]
 	\label{prop-characterization-inequality}
 	Let $X$ be a proper, geodesically complete, \textup{CAT}$(0)$-space and let $G<\Is(X)$ be closed, totally disconnected and cocompact. Let $A<G$ be a closed, almost abelian, almost commensurated subgroup of rank $k$. Then $X$ splits isometrically and $G$-invariantly as $X= Y \times \R^k$.
 \end{prop}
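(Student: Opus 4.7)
The strategy is to produce, out of $A$, a $k$-dimensional flat $\mathbb{E} \subset X$ via a flat-torus argument applied to a quotient $\Z^k$ of $A$, and then to promote the resulting parallel splitting to a $G$-equivariant one by using the almost commensurated hypothesis.

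Since $A$ is almost abelian of rank $k$, I would first fix a compact open normal subgroup $N \triangleleft A$ with $A/N$ finitely generated virtually abelian of rank $k$, and pass to a finite-index open subgroup $A_0 \leq A$ whose quotient $A_0/N$ is isomorphic to $\Z^k$. Being of finite index in an almost commensurated subgroup, $A_0$ is itself closed and almost commensurated in $G$. The compact group $N$ has a nonempty, closed, convex fixed-point set $F := \mathrm{Fix}(N) \subseteq X$ by the Bruhat--Tits fixed point theorem; and since $N$ is normal in $A_0$, the set $F$ is $A_0$-invariant with the action factoring through $A_0/N \cong \Z^k$. Arguing that the translation-length kernel of this $\Z^k$-action is trivial (otherwise, Bruhat--Tits applied to an orbit closure would upgrade $N$ to a strictly larger compact open normal subgroup of $A_0$, contradicting the rank), the Flat Torus Theorem then yields a $\Z^k$-invariant isometrically embedded flat $\mathbb{E} \cong \R^k$ inside $F$, on which $A_0/N$ acts cocompactly by translations.

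Next, consider the parallel set $P = P(\mathbb{E}) \subseteq X$, which is a closed convex subset splitting metrically as $\R^k \times Y'$. The key step is to show that $P$ is $G$-invariant. For each $g \in G$, the intersection $B := A_0 \cap g A_0 g^{-1}$ is cocompact in both $A_0$ and $g A_0 g^{-1}$, hence is itself closed, almost abelian, of rank $k$. Its $\Z^k$-quotient acts by translations along a flat $\mathbb{E}'$ that is parallel to $\mathbb{E}$ (because $B \leq A_0$ is cocompact, their translation subgroups span the same Euclidean direction) and simultaneously parallel to $g\mathbb{E}$ (by the symmetric argument for $g A_0 g^{-1}$). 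Consequently $g\mathbb{E}$ is parallel to $\mathbb{E}$, and so $gP = P$.

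Finally, $P$ is a nonempty, closed, convex, $G$-invariant subset of $X$. Since $X$ is geodesically complete and $G$ is cocompact, the minimality argument of \cite{CM09b} recalled in the proof of Proposition \ref{prop-CM-decomposition} forces $P = X$, yielding the $G$-equivariant metric splitting $X = Y \times \R^k$ with $Y = Y'$. The principal obstacle lies in the third step: verifying that ``almost commensurated'' is genuinely strong enough to force $g\mathbb{E}$ to be \emph{parallel} to $\mathbb{E}$, rather than merely asymptotic at infinity. This requires a careful analysis of how a cocompact inclusion between almost abelian groups of equal rank translates into parallelism of their translation flats, and is where the technical machinery of \cite{Cav23-GHTD} about rank invariance under cocompact inclusion plays its essential role.
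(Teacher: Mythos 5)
This proposition is not proved in the paper at all: it is imported verbatim from \cite[Proposition 4.5]{Cav23-GHTD}, so there is no in-paper argument to compare against. Judged on its own terms, your outline follows what is surely the intended route (compact open normal subgroup $N$, Bruhat--Tits fixed set, Flat Torus Theorem for $A_0/N\cong\Z^k$, parallel set of the resulting flat, $G$-invariance via almost commensurability, and minimality of the $G$-action to conclude $P(\mathbb{E})=X$), and the step you single out as the crux does go through: since $B=A_0\cap gA_0g^{-1}$ is cocompact in both $A_0$ and $gA_0g^{-1}$, its image in each $\Z^k$-quotient has finite index, so $\textup{Min}(B)\supseteq\textup{Min}(A_0)\cup g\,\textup{Min}(A_0)$ contains both $\mathbb{E}$ and $g\mathbb{E}$ as $k$-flats invariant under a rank-$k$ group of translations of $B$; they are therefore both fibers of the splitting $\textup{Min}(B)=Y_B\times\R^k$, hence parallel.

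The one genuine omission is that you never invoke, and tacitly assume, that every element of $G$ acts as a \emph{semisimple} isometry. This is needed twice: the Flat Torus Theorem requires the $\Z^k$-action to be by semisimple isometries, and your ``translation-length kernel is trivial'' argument needs an element with $\ell=0$ to actually have a fixed point (so that $\overline{\langle N,a\rangle}$ has bounded orbits and is compact) rather than being parabolic. Semisimplicity is not automatic in a proper $\CAT(0)$-space; it is exactly here that geodesic completeness and total disconnectedness of $G$ enter, via the smoothness/discrete-orbit results of Caprace--Monod recalled in \cite{Cav23-GHTD} (cp.\ the citation of \cite[Theorem 2.6]{Cav23-GHTD} in Step 2 of the proof of Theorem \ref{theo-intro-stability-abelian-rank}). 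As written, your proof uses geodesic completeness only in the final minimality step, which is a sign that a hypothesis-dependent input has been skipped; once semisimplicity is supplied, the argument is sound.
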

 Moreover almost abelian groups act as lattices on some convex subsets.
 \begin{prop}[\textup{\cite[Proposition 3.12]{Cav23-GHTD}}]
 	\label{prop-almost-abelian-lattice}
 	Let $X$ be a proper, geodesically complete, $\CAT(0)$-space and let $G<\Is(X)$ be closed, totally disconnected and cocompact. Let $A<G$ be an almost abelian subgroup of rank $k$. Then there exists a closed, convex, $A$-invariant subset $Y\subset X$ isometric to $\R^k$ such that $A$ acts a lattice on $Y$.
 \end{prop}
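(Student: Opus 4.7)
The plan is to exploit the compact, open, normal subgroup $N \triangleleft A$ given by almost-abelianity, and reduce to a Flat Torus Theorem argument on its fixed-point set. Fix such an $N$ with $A/N$ discrete, finitely generated, virtually abelian of rank $k$, and choose a normal finite-index subgroup $A' \triangleleft A$ with $A'/N \cong \Z^k$ (obtained by intersecting the finitely many $A$-conjugates of a finite-index free abelian subgroup of $A/N$).

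Since $N$ is compact, the Cartan fixed-point theorem gives $F := \text{Fix}(N) \neq \emptyset$; $F$ is closed and convex in $X$, and $A$-invariant by normality of $N$. The first key step is to show that the induced action of $\Z^k = A'/N$ on $F$ is free. If some $a \in A' \setminus N$ fixed a point $y \in F$, then $\langle a \rangle$ would sit inside the compact stabilizer $\text{Stab}_G(y)$, so $\overline{\langle a \rangle}$ would be compact abelian; its image in the discrete quotient $A/N$ would be a compact, hence finite, subgroup of $\Z^k$, hence trivial, forcing $a \in N$, a contradiction.

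The second key step, which I expect to be the main technical obstacle, is the semisimplicity of every element of $A$ viewed as an isometry of $F$. This is not automatic from cocompactness alone, but should follow from the specific features of the totally disconnected, unimodular, geodesically complete setting of the statement; one possible route is to combine the presence of the compact open normal $N$ inside $A$ with the $G$-cocompactness of $X$ to extract, from any minimizing sequence for the displacement function of an element $a$, a subsequence converging to a point that realizes the translation length.

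Granting semisimplicity, the Flat Torus Theorem applied to the free $\Z^k$-action on $F$ produces a nonempty $\Z^k$-invariant closed convex subset of $F$ canonically isometric to $Y' \times \R^k$, with $\Z^k$ acting trivially on the $Y'$-factor and as a translation lattice on $\R^k$. Normality of $A'$ in $A$ forces each $a \in A$ to conjugate $\Z^k$ to itself, so $A$ preserves this product and its canonical splitting; since $A'$ itself acts trivially on $Y'$, the residual action of $A$ on $Y'$ factors through the finite group $A/A'$ and admits a fixed point $y' \in Y'$ by Cartan's theorem. The slice $Y := \{y'\} \times \R^k$ is then the desired $A$-invariant flat: the action of $A$ on $Y$ is by Euclidean isometries and contains $\Z^k$ as a finite-index translation subgroup, hence acts as a crystallographic lattice on $Y$.
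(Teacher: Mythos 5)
First, a remark on the comparison you asked for: this paper does not prove Proposition \ref{prop-almost-abelian-lattice} at all --- it is imported verbatim from \cite[Proposition 3.12]{Cav23-GHTD} --- so there is no internal argument to measure yours against. Your overall strategy (pass to $F=\textup{Fix}(N)$ for the compact open normal subgroup $N$, check that the induced $\Z^k=A'/N$-action on $F$ is free and proper, apply the Flat Torus Theorem, then kill the residual finite action on the complementary factor by Cartan's fixed-point theorem) is the natural route, and those steps are essentially sound: properness of the induced $\Z^k$-action follows from properness of the $G$-action on the proper space $X$ together with discreteness of $A'/N$, and semisimplicity, \emph{once known on $X$}, does pass to $F$ because the nearest-point projection onto an invariant closed convex set commutes with the isometry and does not increase displacement.

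The genuine gap is exactly the step you flag and then wave at: semisimplicity of the elements of $A$. The route you sketch --- use cocompactness to extract a convergent subsequence from a minimizing sequence $x_n$ of the displacement function of $a$ --- does not work: translating $x_n$ back into a fixed compact set by $g_n\in G$ replaces $a$ by the conjugates $g_n^{-1}ag_n$, and since translation length is not lower semicontinuous, the limit of these conjugates attaining its translation length says nothing about $a$ itself. (Parabolic isometries of symmetric spaces, whose full isometry group is cocompact, show that cocompactness alone is insufficient.) The missing ingredient is the theorem of Caprace--Monod that a totally disconnected locally compact group acting continuously, properly and cocompactly on a proper, \emph{geodesically complete} $\CAT(0)$-space acts smoothly (open point stabilizers), whence every element is semisimple and orbits are discrete (see \cite[\S 6]{CM09b}); this is where total disconnectedness and geodesic completeness enter irreplaceably, and unimodularity --- which you invoke but which is not even assumed in the statement --- plays no role. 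A secondary point you should address: your conclusion is that the image of $A$ in $\Is(Y)$ is a crystallographic group containing a finite-index translation lattice, whereas the paper's ``lattice of $\R^k$'' means a discrete, \emph{free abelian}, cocompact group of translations, and this stronger form is what is used downstream (choice of a basis $\mathcal{B}$, word lengths $\ell_{\mathcal{B}}$, Proposition \ref{prop-lattices-uniform-word-length}); either explain why the point group is trivial in this setting or record that you only obtain the required lattice after passing to a finite-index subgroup.
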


\subsection{Lattices in Euclidean spaces}
Euclidean spaces play a special role in the class of our $\CAT(0)$-spaces, as well highlighted in \cite{CS23} and \cite{Cav23-GHTD}.
We will denote points in $\mathbb{R}^k$ by a bold letter {\bf v} and the origin with ${\bf O}$. A lattice of $\mathbb{R}^k$ is a discrete, free abelian, cocompact group of isometries of $\mathbb{R}^k$.
It is well known that a lattice must act by translations on $\mathbb{R}^k$ (see for instance \cite{farkas}); so, alternatively, a lattice $\mathcal{L}$ can be seen as the set of linear combinations with integer coefficients of $k$ independent vectors $\bf b_1, \ldots, \bf b_k$. A {\em basis} $\mathcal{B} = \lbrace \bf b_1,\ldots,\bf b_k\rbrace$ of a lattice $\mathcal{L}$ is a set of $k$ independent vectors that generate $\mathcal{L}$ as a group. Given a basis $\mathcal{B}$ of a lattice $\mathcal{L}$ and an element ${\bf v} \in \mathcal{L}$ we denote by $\ell_{\mathcal{B}}({\bf v})$ the classical word length of ${\bf v}$ with respect to the generating set $\mathcal{B}$. We also set $\lambda(\mathcal{B}) := \max_i \Vert {\bf b_i} \Vert$.\\
There are many geometric invariants associated to a lattice $\mathcal{L}$, for instance:\\
-- the \emph{shortest generating radius}, that is
$$\lambda(\mathcal{L}) = \inf\lbrace r > 0 \text{ s.t. } \mathcal{L} \text{ contains } k \text{ independent vectors of length} \leq r\rbrace;$$
-- the \emph{shortest vector}, that is 
$$\tau(\mathcal{L}) = \inf \left\lbrace \Vert {\bf v} \Vert \text{ s.t. } {\bf v} \in \mathcal{L} \setminus \lbrace {\bf O} \rbrace \right\rbrace.$$
Notice that $\tau(\mathcal{L})$ coincides with the free-systole of $\mathcal{L}$, while the codiameter of $\mathcal{L}$ is at most $\sqrt{k}\cdot\lambda(\mathcal{L})$ (cp. for instance \cite[§2.3]{CS23}). The notation is coherent in the sense that $\lambda(\mathcal{L}) = \min_{\mathcal{B} \text{ basis of } \mathcal{L}} \lambda(\mathcal{B})$.

%\noindent A property we will need is the existence of nice bases. To every basis $\mathcal{B} = \lbrace \bf b_1,\ldots,\bf b_k\rbrace$ we associate  the following quantities that are defined inductively:
%$${\bf b_i^*} = {\bf b_i} - \sum_{j=1}^{i-1}\mu_{i,j}{\bf b_j^*}, \qquad \mu_{i,j} = \frac{\langle {\bf b_i}, {\bf b_j^*} \rangle}{\langle {\bf b_j^*}, {\bf b_j^*} \rangle}.$$
%This is nothing else than the classical Gram-Schmidt algorithm applied to the basis $\mathcal{B}$. A basis $\mathcal{B}$ is said \emph{reduced} if the following two conditions are satisfied:
%\begin{equation}
%	\label{eq-reduced-1}
%	\vert \mu_{i,j} \vert \leq \frac{1}{2} \quad \text{for all } 1\leq j < i \leq k;
%\end{equation}
%\begin{equation}
%	\label{eq-reduced-2}
%	\Vert {\bf b_i^*} + \mu_{i,i-1}{\bf b_{i-1}^*} \Vert^2 \geq \frac{3}{4} \Vert{\bf b_{i-1}^*} \Vert^2 \quad \text{for all } 1 < i \leq k.
%\end{equation}
%
%Reduced bases are important because they are formed by vectors that are uniformly linearly independent. This sentence was precised in \cite[Theorem 5.1]{Bab86}. We will use this property indirectly in the next fact.
\vspace{1mm}
\noindent In the sequel we will need the next uniform estimate.

\begin{prop}
	\label{prop-lattices-uniform-word-length}
	Let $\tau, \lambda > 0$, $k\in \mathbb{N}$ and $R \geq 0$. There exists a constant $C = C(\tau,\lambda,k,R)$ such that the following holds true. Let $\mathcal{L}$ e a lattice of $\mathbb{R}^k$ with $\tau(\mathcal{L}) \geq \tau$. Let $\mathcal{B}$ be a basis of $\mathcal{L}$ with $\lambda(\mathcal{B}) \leq \lambda$. Let ${\bf v} \in \mathcal{L}$ be an element with $\Vert {\bf v} \Vert \leq R$. Then $\ell_{\mathcal{B}}({\bf v}) \leq C$.
\end{prop}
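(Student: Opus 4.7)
The plan is to view the basis $\mathcal{B} = \{{\bf b}_1, \ldots, {\bf b}_k\}$ as the columns of a matrix $B \in GL_k(\mathbb{R})$. Then writing ${\bf v} = \sum_{i=1}^k n_i {\bf b}_i$, the integer coordinates $(n_1, \ldots, n_k)$ are precisely the entries of $B^{-1}{\bf v}$, so
$$\ell_{\mathcal{B}}({\bf v}) = \sum_i \vert n_i\vert \leq \sqrt{k}\,\Vert B^{-1}{\bf v}\Vert_2 \leq \sqrt{k}\,\Vert B^{-1}\Vert_{\mathrm{op}}\cdot R.$$
Hence it suffices to bound the operator norm of $B^{-1}$, equivalently to bound the smallest singular value $\sigma_k(B)$ from below, uniformly in terms of $\tau$, $\lambda$ and $k$.

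For the upper bound on the other singular values I would simply use the Frobenius norm: $\sigma_i(B) \leq \sigma_1(B) \leq \Vert B\Vert_F = \bigl(\sum_i \Vert {\bf b}_i\Vert^2\bigr)^{1/2} \leq \sqrt{k}\,\lambda$. For the lower bound on $\vert\det B\vert$, which equals the covolume of $\mathcal{L}$, I would invoke Minkowski's first theorem: since the open ball $B({\bf O},\tau)$ contains no non-zero lattice point by hypothesis $\tau(\mathcal{L})\geq\tau$, its volume satisfies $\omega_k\tau^k \leq 2^k\vert\det B\vert$, where $\omega_k$ is the volume of the unit ball in $\mathbb{R}^k$. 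Combining with $\sigma_1\cdots\sigma_k=\vert\det B\vert$:
$$\sigma_k(B) \geq \frac{\vert\det B\vert}{\sigma_1(B)\cdots\sigma_{k-1}(B)} \geq \frac{\omega_k\,\tau^k}{2^k\,(\sqrt{k}\,\lambda)^{k-1}}.$$
The desired constant is then $C := \sqrt{k}\,R\cdot 2^k(\sqrt{k}\,\lambda)^{k-1}/(\omega_k\tau^k)$, which depends only on $\tau,\lambda,k,R$.

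The argument is a routine linear-algebra estimate, so I do not anticipate a serious obstacle; the only non-elementary ingredient is Minkowski's theorem, which provides the essential link between the metric datum $\tau(\mathcal{L})$ and the algebraic datum $\vert\det B\vert$. A softer alternative would be a compactness-contradiction argument via Mahler's compactness theorem, exploiting the fact that the space of lattices with bounded shortest vector and bounded covolume is compact modulo $\mathrm{GL}_k(\mathbb{Z})$; however the direct singular-value computation is cleaner and yields an explicit constant.
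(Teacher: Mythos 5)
Your argument is correct, and it takes a genuinely different route from the paper. The paper proves the proposition by contradiction via the ultralimit machinery: a putative sequence of counterexamples $(\mathcal{L}_j,\mathcal{B}_j,{\bf v_j})$ converges (in the equivariant sense, using the uniform bounds on codiameter and free-systole) to a limit lattice $\mathcal{L}_\omega$ with limit basis $\mathcal{B}_\omega$, and the fact that the projections $\mathcal{L}_\omega \to \mathcal{L}_j$ are eventually isomorphisms transfers the finite word length of ${\bf v_\omega}$ back to ${\bf v_j}$, contradicting $\ell_{\mathcal{B}_j}({\bf v_j}) \geq j$. Your proof instead is a direct singular-value estimate: the identities $\ell_{\mathcal{B}}({\bf v}) \leq \Vert B^{-1}{\bf v}\Vert_1 \leq \sqrt{k}\,\Vert B^{-1}\Vert_{\mathrm{op}} R$ and $\Vert B^{-1}\Vert_{\mathrm{op}} = \sigma_k(B)^{-1}$ reduce everything to a lower bound on $\sigma_k(B)$, which you obtain from $\sigma_1\cdots\sigma_k = \vert\det B\vert = \mathrm{covol}(\mathcal{L})$, the Frobenius bound $\sigma_i(B)\leq\sqrt{k}\lambda$, and Minkowski's first theorem linking $\tau(\mathcal{L})\geq\tau$ to a lower bound on the covolume. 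All steps check out (including the identification of $\ell_{\mathcal{B}}$ with the $\ell^1$-norm of the integer coordinates, which holds since $\mathcal{B}$ is a free basis). What each approach buys: yours is self-contained, quantitative, and produces the explicit constant $C = \sqrt{k}\,R\cdot 2^k(\sqrt{k}\lambda)^{k-1}/(\omega_k\tau^k)$, whereas the paper's compactness argument is non-constructive but requires no number-theoretic input beyond the convergence theory already developed in \cite{CS23}, which the paper has on hand anyway and reuses throughout.
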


\begin{proof}
	Suppose the thesis is not true and find lattices $\mathcal{L}_j$ of $\mathbb{R}^k$, all satisfying $\tau(\mathcal{L}_j) \geq \tau$, bases $\mathcal{B}_j = \lbrace {\bf b_{1,j}}, \ldots {\bf b_{k,j}} \rbrace$ of $\mathcal{L}_j$ with $\lambda(\mathcal{B}_j) \leq \lambda$ and vectors ${\bf v_j} \in \mathcal{L}_j$ with $\Vert {\bf v_j} \Vert \leq R$ such that $\ell_{\mathcal{B}_j}({\bf v_j}) \geq j$. As recalled above the groups $\mathcal{L}_j$ have all codiameter at most $\sqrt{k}\cdot \lambda$ and free-systole at least $\tau$. Fix a non-principal ultrafilter $\omega$. A very special case of \cite[Theorem 7.10.(ii)]{CS23} says that the limit group $\mathcal{L}_\omega$ is again a lattice of $\mathbb{R}^k$ and that the natural projection map $\pi_j\colon\mathcal{L}_\omega \to \mathcal{L}_j$, ${\bf v_\omega} = \omega$-$\lim {\bf v_j} \mapsto {\bf v_j}$ is an isomorphism of groups for $\omega$-a.e.$(j)$. The sequences ${\bf b_{i,j}}$ are admissible and define isometries ${\bf b_{i,\omega}} \in \mathcal{L}_\omega$ for $i=1,\ldots,k$. Since $\pi_j$ is an isomorphism then $\mathcal{B}_\omega = \lbrace {\bf b_{1,\omega}}, \ldots {\bf b_{k,\omega}} \rbrace$ is a basis of $\mathcal{L}_\omega$. The vector ${\bf v_\omega} := \omega$-$\lim {\bf v_j}$ is a well defined element of $\mathcal{L}_\omega$. Therefore it can be written as product of elements of $\mathcal{B}_\omega$, say ${\bf v_\omega} = \beta_1{\bf b_{1,\omega}} + \cdots + \beta_k{\bf b_{k,\omega}}$. Since $\pi_j$ is an isomorphism then ${\bf v_j} = \beta_1{\bf b_{1,j}} + \cdots + \beta_k{\bf b_{k,j}}$, i.e. $\ell_{\mathcal{B}_j}({\bf v_j}) \leq \sum_{i=1}^k \vert \beta_i \vert < j$ if $j$ is big enough, giving the contradiction.
\end{proof}

%\begin{prop}[\cite{LLL82}]
%	Let $\mathcal{L}$ be a lattice of $\mathbb{R}^k$ with basis $\mathcal{A}$. %$= \lbrace {\bf a_1}, \ldots, {\bf a_k} \rbrace$. 
%	Then there exists a basis $\mathcal{B} = \lbrace {\bf b_1}, \ldots, {\bf b_k} \rbrace$ of $\mathcal{L}$ with the following properties:
%	\begin{itemize}
%		\item[(i)] $\mathcal{B}$ is reduced;
%		\item[(ii)] $\lambda(\mathcal{B}) \leq 2^{k-1}\cdot\lambda(\mathcal{L})$;
%		\item[(iii)] there exists a constant $L = L(\lambda(\mathcal{A}), \tau(\mathcal{L}), k)$ only depending on these parameters such that $\ell_{\mathcal{A}}({\bf b_i}) \leq L$ for every $i=1,\ldots,k$.
%	\end{itemize}
%\end{prop}
%\begin{proof}
%In \cite[Proposition 1.12]{LLL82} is stated that every reduced basis satisfies (ii) and in the same paper it is presented an algorithm that takes in input a given basis of $\mathcal{L}$ and gives a reduced basis. In order to obtain the estimate in (iii) we need to check in detail the steps of this algorithm. We present the algorithm with input our basis $\mathcal{A}$.  
%\end{proof}
%

 \section{The proof of Theorem \ref{theo-intro-characterization}}
 
 In this section we assume that $X$ is a proper, geodesically complete, CAT$(0)$-space $X$ and $G<\text{Isom}(X)$ is closed, totally disconnected, unimodular and cocompact. Remark \ref{rmk-unimodularity-td-isom} says that $\Is(X)$ is unimodular and cocompact, so Proposition \ref{prop-CM-decomposition} applies. In particular $\Is(X) = S \times \mathcal{E}_k \times D$, where $S$ is a semi-simple Lie group with trivial center and no compact factor, $\mathcal{E}_k = \Is(\R^k)$ and $D$ is locally compact and totally disconnected. We generalize \cite[Lemma 6]{CM19} to the totally disconnected case.
 
 \begin{prop}
 	\label{prop-commensurated-product}
 	Let $S \times \mathcal{E}_k \times D$ with $S$ semi-simple Lie group with trivial center and no compact factor, $\mathcal{E}_k = \Is(\R^k)$ and $D$ locally compact and totally disconnected.
 	Then every closed, cocompact, totally disconnected subgroup $G < S \times \mathcal{E}_k \times D$ commensurates an almost abelian subgroup of $G$ of rank $k$.
 \end{prop}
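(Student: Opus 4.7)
My plan is to adapt the Caprace--Monod argument from \cite[Lemma 6]{CM19} by replacing the discrete lattice setting with compact open subgroups of the totally disconnected group $D$. Explicitly, I will build an almost abelian subgroup of rank $k$ as the intersection of $G$ with a carefully chosen product $\{e_S\} \times \mathbb{R}^k \times U$, so that conjugation by an element of $G$ only perturbs the $U$-factor to a commensurable compact open subgroup, where the totally disconnected structure automatically delivers commensurability.

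I would begin by passing to $G_1 := G \cap (S \times \mathbb{R}^k \times D)$, the kernel of the natural map $G \hookrightarrow S \times \mathcal{E}_k \times D \twoheadrightarrow \mathcal{E}_k/\mathbb{R}^k = O(k)$; compactness of $O(k)$ makes $G_1$ a cocompact, closed, normal subgroup of $G$. By van Dantzig together with the properness of the $D$-action on $N$, I fix $n_0 \in N$ with compact open stabilizer $U := \textup{Stab}_D(n_0)$, and define
\[ A := G_1 \cap \bigl(\{e_S\} \times \mathbb{R}^k \times U\bigr). \]
Since $A$ is contained in the direct product $\mathbb{R}^k \times U$ of an abelian group and a compact group, its commutator subgroup lies in $\{e_S\} \times \{0\} \times [U,U]$, so $\overline{[A,A]}$ is compact; moreover $A$ is closed and totally disconnected.

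The main obstacle I anticipate is showing that the translation projection $p\colon A \to \mathbb{R}^k$ has image a full-rank lattice. For any $m_0 \in M$, the subgroup $A$ stabilizes $(m_0, n_0)$ and acts on the Euclidean slice $\{m_0\} \times \mathbb{R}^k \times \{n_0\}$ by translations. Restricting the $G$-action on $X$ to this closed $A$-invariant slice makes the resulting $A$-action proper, so $p(A)$ is closed in $\mathbb{R}^k$; being a continuous quotient of the totally disconnected $A$ by a compact kernel, $p(A)$ is totally disconnected, hence discrete and isomorphic to $\mathbb{Z}^b$ for some $b$. For cocompactness I would pass to the larger stabilizer $\tilde A := G \cap (\textup{Stab}_S(m_0) \times \mathcal{E}_k \times U)$: the quotient $\tilde A \backslash \mathbb{R}^k$ injects properly into the compact space $G \backslash X$, hence is compact, and since $\tilde A / A$ embeds in $O(k)$ and is therefore compact, the intermediate map $A \backslash \mathbb{R}^k \to \tilde A \backslash \mathbb{R}^k$ has compact fibres over a compact base, forcing $A \backslash \mathbb{R}^k$ to be compact as well. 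Thus $b = k$, and Lemma \ref{lemma-almost-commutator} yields that $A$ is almost abelian of rank $k$.

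Finally, almost commensuration is straightforward. For any $g = (s, (v',r), d) \in G$ and $a = (e_S, w, u) \in A$, the centrality of $\mathbb{R}^k$ in $S \times \mathbb{R}^k \times D$ together with the semidirect-product structure of $\mathcal{E}_k$ give $gag^{-1} = (e_S, rw, dud^{-1})$, so $gAg^{-1} \subseteq \{e_S\} \times \mathbb{R}^k \times dUd^{-1}$ and
\[ A \cap gAg^{-1} = G_1 \cap \bigl(\{e_S\} \times \mathbb{R}^k \times (U \cap dUd^{-1})\bigr). \]
Since $U$ and $dUd^{-1}$ are compact open in the totally disconnected $D$, their intersection is of finite index in each, so $A \cap gAg^{-1}$ has finite index (hence is cocompact) in both $A$ and $gAg^{-1}$. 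An application of Lemma \ref{lemma-commensurability-generators} to a compact generating set of $G$ concludes that $A$ is almost commensurated by $G$.
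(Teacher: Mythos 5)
Your construction of $A$ and your commensuration argument at the end (intersecting with $\{e_S\}\times\mathbb{R}^k\times(U\cap dUd^{-1})$ and invoking Lemma \ref{lemma-commensurability-generators}) are sound and close in spirit to what the paper does. The gap is in the middle, and it sits exactly on the hard point of the proposition: proving that the translation projection $p(A)$ has \emph{full} rank $k$. You assert that $\tilde A\backslash\mathbb{R}^k$ ``injects properly'' into $G\backslash X$ and is ``hence compact''. The injectivity is correct, but a continuous injection into a compact space does not make the source compact, and properness is precisely what needs proof: a priori the $G$-saturation of the slice $\{m_0\}\times\mathbb{R}^k\times\{n_0\}$ need not even be closed (saturations of closed non-compact sets under proper actions can be dense --- think of an irrational line in a torus, which is exactly what happens for an ``irreducible'' lattice in a product). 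Ruling this out is where the hypothesis that $S$ is semisimple with trivial centre and no compact factor must be used, and your argument never uses it; an argument of this shape would ``prove'' the full-rank claim for an arbitrary factor $S$, where it is false. The paper handles this point by first cutting down to $G^*=G\cap(S\times\mathcal{E}_k\times Q)$, showing its projection to $S\times\mathcal{E}_k$ is a uniform lattice, and then invoking the lattice-splitting Lemmas 3.4 and 3.5 of \cite{CM09a}, which are the genuine input guaranteeing that $G^*\cap(1\times\mathcal{E}_k\times Q)$ projects onto a rank-$k$ lattice. Some substitute for that input is unavoidable.

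Two secondary instances of the same logical slip: the opening claim that $G_1$ is cocompact in $G$ ``by compactness of $O(k)$'', and the claim that $\tilde A/A$ ``embeds in $O(k)$ and is therefore compact''. A continuous injective homomorphism of a locally compact group into a compact group does not imply compactness of the source ($\mathbb{Z}$ embeds continuously and injectively into a torus); moreover $\tilde A/A$ only embeds into $\mathrm{Stab}_S(m_0)\times O(k)$, not into $O(k)$, since elements of $\tilde A$ may have nontrivial $S$-component. These can likely be repaired (e.g.\ via closedness of the relevant subgroups and the open mapping theorem for $\sigma$-compact groups), but as written they are non sequiturs, and in any case the main missing step described above is not of this repairable kind.
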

 \begin{proof}
 	Let $Q < D$ be a compact open subgroup. The intersection
 	$G^* = G \cap (S \times \mathcal{E}_k \times Q)$ is closed and totally disconnected. Indeed $G^*$ is a closed subgroup of $G$, so its connected component $(G^*)^o$ is a closed, connected subgroup of $G$, therefore it must be trivial. Moreover $G^*$ is cocompact in $S \times \mathcal{E}_k \times Q$. In order to prove it, we consider the continuous and open projection map $p\colon S\times \mathcal{E}_k \times D \to (S\times \mathcal{E}_k \times D) / G$. The set $p(S\times \mathcal{E}_k \times Q)$ is closed in the compact space $(S\times \mathcal{E}_k \times D) / G$, so compact as well. Moreover if $g_1,g_2 \in S\times \mathcal{E}_k \times Q$ are such that $p(g_1)=p(g_2)$ then $g_2^{-1}g_1 \in G \cap (S\times \mathcal{E}_k \times Q) = G^*$. Therefore the induced map $p\colon (S\times \mathcal{E}_k \times Q) / G^* \to (S\times \mathcal{E}_k \times D) / G$ is well defined, continuous, closed and injective. We deduce that $(S\times \mathcal{E}_k \times Q) / G^*$ is homeomorphic to its image through $p$ which is a compact space, showing that $G^*$ is cocompact in $ S\times \mathcal{E}_k \times Q$. Furthermore $G^*$ is commensurated by $G$. Indeed if $g=(g_S,g_{\mathcal{E}_k},g_D) \in G$ then
 	$$gG^*g^{-1} \cap G^* = G\cap (S\times \mathcal{E}_k \times (g_DQg_D^{-1}\cap Q))$$
 	and $g_DQg_D^{-1}\cap Q$ has finite index in $Q$ because it is an open subgroup of the compact group $Q$. So $gG^*g^{-1} \cap G^*$ has finite index in $G^*$ and in $gG^*g^{-1}$.\\
 	Since $Q$ is compact, the projection of $G^*$ to $S \times \mathcal{E}_k$ is closed, totally disconnected and cocompact. Indeed let $(g_{j,S}, g_{j,\mathcal{E}_k}, g_{j,Q})$ be a sequence of elements of $G^*$ such that $g_{j,S}$ converges to $g_{\infty,S} \in S$ and $g_{j,\mathcal{E}_k}$ converges to $g_{\infty, \mathcal{E}_k}$. By compactness of $Q$ we can suppose that also $g_{j,Q}$ converges to some $g_{\infty, Q}$, up to pass to a subsequence. Therefore the sequence $(g_{j,S}, g_{j,\mathcal{E}_k}, g_{j,Q})$ converges to $(g_{\infty,S}, g_{\infty,\mathcal{E}_k}, g_{\infty,Q}) \in G^*$ since $G^*$ is closed. This shows that $(g_{\infty,S}, g_{\infty,\mathcal{E}_k})$ belongs to $p_{S\times \mathcal{E}_k}(G^*)$, which is then closed. The cocompactness is clear.	The group $p_{S\times \mathcal{E}_k}(G^*)$ is therefore locally compact, being closed in $S\times \mathcal{E}_k$. Since $G^*$ is totally disconnected then it has a neighbourhood basis at the identity consisting of compact open subgroups. The map $p_{S\times \mathcal{E}_k}\colon G^* \to p_{S\times \mathcal{E}_k}( G^* )$ is a surjective, continuous homomorphism between locally compact groups, so it is open. This implies that also $p_{S\times \mathcal{E}_k}( G^* )$ has a neighbourhood basis at the identity consisting of compact open subgroups, so it is totally disconnected. The group $p_{S\times \mathcal{E}_k}(G^*)$ is a closed, totally disconnected subgroup of the Lie group $S\times A$, so it must be discrete. Since it is also cocompact it is a uniform lattice.
 	Upon replacing $G^*$ by a
 	finite index subgroup, we may then assume by \cite[Lemma 3.4 and 3.5]{CM09a} that
 	$G^*$ possesses two normal subgroups $G^*_S$ and $G^*_{\mathcal{E}_k}$, both commensurated by $G$, such that
 	$G^* = G^*_S \cdot G^*_{\mathcal{E}_k}$ and $G^*_S \cap G^*_{\mathcal{E}_k} \subseteq Q$, where $G^*_{\mathcal{E}_k} = G^* \cap (1 \times \mathcal{E}_k \times Q)$ is a group whose projection to $\mathcal{E}_k$ is a lattice. 
 	The group $G^*_{\mathcal{E}_k}$ is almost abelian of rank $k$, indeed it is a locally compact group with a surjective continuous homomorphism with values in $\mathbb{Z}^k$ whose kernel is compact.
 \end{proof}

 We are ready to provide the
 \begin{proof}[Proof of Theorem \ref{theo-intro-characterization}]
 As explained at the beginning of this section we know that $\Is(X) = S \times \mathcal{E}_k \times D$, where $k = \Eucl(X)$. The group $G$ is cocompact in $\Is(X)$ (see Remark \ref{rmk-unimodularity-td-isom}), so Proposition \ref{prop-commensurated-product} gives an almost abelian subgroup of $G$ with rank $k$ which is commensurated. This proves that $\Eucl(X) \leq \Ab(G)$. On the other hand Proposition \ref{prop-characterization-inequality} gives $\Ab(G) \leq \Eucl(X)$.
 \end{proof}
 
 \begin{obs}
 	Notice that the proof above gives something more: indeed it says that the abelian rank of a group $G$ as in the statement coincides with the maximal rank of an almost abelian group which is commensurated, which is stronger than being almost commensurated. However we will not use it.
 \end{obs}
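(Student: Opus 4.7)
The plan is to prove equality by establishing the two inequalities $\Eucl(X) \leq \Ab(G)$ and $\Ab(G) \leq \Eucl(X)$ separately. The inequality $\Ab(G) \leq \Eucl(X)$ is immediate from Proposition \ref{prop-characterization-inequality}: any closed, almost abelian, almost commensurated subgroup $A < G$ of rank $k$ forces $X$ to split $G$-equivariantly as $Y \times \R^k$, so that $\Eucl(X) \geq \rk A$. Taking the supremum over such $A$ gives the bound.

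For the substantial direction $\Eucl(X) \leq \Ab(G)$, I would follow the Caprace--Monod scheme of \cite{CM19}, adapted to the totally disconnected unimodular setting. By Remark \ref{rmk-unimodularity-td-isom}, the existence of such a $G$ forces $\Is(X)$ itself to be unimodular and cocompact, so Proposition \ref{prop-CM-decomposition} yields $\Is(X) = S \times \mathcal{E}_k \times D$ with $k = \Eucl(X)$, $S$ a semi-simple Lie group, and $D$ locally compact totally disconnected. The goal is to produce an almost abelian, almost commensurated subgroup of $G$ of rank $k$. By van Dantzig's theorem, pick a compact open subgroup $Q < D$ and consider $G^\ast := G \cap (S \times \mathcal{E}_k \times Q)$. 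I would then verify, in order, that $G^\ast$ is (i) closed and totally disconnected (its connected component is a closed connected subgroup of $G$, hence trivial), (ii) cocompact in $S \times \mathcal{E}_k \times Q$, via the factorization through the projection $\Is(X) \to \Is(X)/G$ and the compactness of $Q$, and (iii) commensurated in $G$, since for $g = (g_S, g_{\mathcal{E}_k}, g_D) \in G$ the intersection $g_D Q g_D^{-1} \cap Q$ is an open subgroup of the compact group $Q$, hence has finite index. Projecting $G^\ast$ to $S \times \mathcal{E}_k$ produces a closed, totally disconnected, cocompact subgroup of a Lie group, which must therefore be a uniform lattice. At this point I would invoke \cite[Lemmas 3.4 and 3.5]{CM09a} to pass to a finite-index subgroup of $G^\ast$ decomposing as $G^\ast_S \cdot G^\ast_{\mathcal{E}_k}$, with both normal factors still commensurated in $G$, and where $G^\ast_{\mathcal{E}_k} := G^\ast \cap (1 \times \mathcal{E}_k \times Q)$ projects to a lattice of $\mathcal{E}_k$ with kernel contained in $Q$. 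Directly from the definition, $G^\ast_{\mathcal{E}_k}$ is almost abelian of rank $k$: it is a locally compact group admitting a continuous surjection onto $\Z^k$ with compact kernel, so Lemma \ref{lemma-almost-commutator} (or a direct check) applies.

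The main obstacle is the technical verification that $G^\ast$ is genuinely cocompact in $S \times \mathcal{E}_k \times Q$ and that its projection onto the Lie factor is closed; both steps rely crucially on the compactness of $Q$ and on the openness of the projection $p\colon \Is(X) \to \Is(X)/G$. A second delicate point is the appeal to \cite[Lemmas 3.4, 3.5]{CM09a} to extract the normal decomposition $G^\ast = G^\ast_S \cdot G^\ast_{\mathcal{E}_k}$ while preserving commensuration by $G$; this is where the arithmetic structure of the lattice $p_{S \times \mathcal{E}_k}(G^\ast)$ interacts with the commensuration hypothesis to do the real work, and any looseness in the setup could easily break either normality or commensuration of the final almost abelian factor.
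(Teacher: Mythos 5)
Your argument is correct and follows essentially the paper's own route: the remark is precisely the observation that the subgroup $G^*_{\mathcal{E}_k}$ produced in Proposition \ref{prop-commensurated-product} is genuinely \emph{commensurated} by $G$ (not merely almost commensurated) and has rank $k=\Eucl(X)=\Ab(G)$, while the reverse inequality is immediate because a commensurated almost abelian subgroup is in particular almost commensurated and hence falls under Proposition \ref{prop-characterization-inequality}. You have reconstructed exactly that construction, including the commensuration of $G^*$ and of the normal factor $G^*_{\mathcal{E}_k}$ via \cite[Lemmas 3.4 and 3.5]{CM09a}, so nothing further is needed.
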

	
\section{The proofs of Theorems \ref{theo-intro-stability-abelian-rank}, \ref{theo-intro-td} and \ref{theo-intro-rigidity-packed}}
	
We continue with the 
\begin{proof}[Proof of Theorem \ref{theo-intro-stability-abelian-rank}]
	We are in the standard setting of convergence with basepoints $x_j$.
	Since we are assuming that $G_\infty$ is totally disconnected then we are in the non-collapsed case, by Proposition \ref{prop-limittd-iff-noncollapsed}. Theorem \ref{theo-intro-characterization} says that $\Ab(G_j) = \Eucl(X_j)$ for every $j \in \mathbb{N} \cup \lbrace \infty \rbrace$. Moreover if infinitely many $X_j$'s split a $\R^k$ then also $X_\infty$ splits a $\R^k$. This means that $\Eucl(X_\infty) \geq \limsup_{j\to +\infty} \Eucl(X_j)$, and so $\Ab(G_\infty) \geq \limsup_{j\to +\infty} \Ab(G_j).$
	The difficult part is to show that
	$$\Ab(G_\infty) \leq \liminf_{j\to +\infty} \Ab(G_j).$$
	In order to do so we fix a non-principal ultrafilter $\omega$. By Proposition \ref{prop-GH-ultralimit} and \cite[Lemma 6.3]{Cav21ter} it is enough to establish that
	$$\Ab(G_\omega) \leq \omega\text{-}\lim \Ab(G_j).$$
	We set $k := \Ab(G_\omega)$ and we fix an almost abelian, almost commensurated subgroup $A < G_\omega$ of rank $k$. Let $Y \subset X_\omega$ be a closed, convex, $A$-invariant subset isometric to $\R^k$ on which $A$ acts as a lattice $\mathcal{L}$ as in Proposition \ref{prop-almost-abelian-lattice}. We fix elements $a_{1,\omega}, \ldots, a_{k,\omega} \in A$ forming a basis of $\mathcal{L}$. Each isometry $a_{i,\omega}$ is by definition the limit of admissible isometries $a_{i,j} \in G_j$, $i=1,\ldots,k$. 
	We set $A_j := \langle a_{1,j}, \ldots, a_{k,j} \rangle$ and we claim that $\overline{A_j}$ is an almost abelian subgroup of rank $k$ which is almost commensurated in $G_j$ for $\omega$-a.e.$(j)$. By definition this implies that $\Ab(G_j) \geq k$ for $\omega$-a.e.$(j)$ concluding the proof.\\
	\textbf{Step 1:} \emph{$\overline{[A_j,A_j]}$ is compact for $\omega$-a.e.$(j)$.} \\
	We fix a point $y_\omega = \omega$-$\lim y_j \in Y$. By assumption the commutators $[a_{i,\omega}, a_{\ell,\omega}]$ fix the point $y_\omega$ for $1\leq i,\ell \leq k$. So we have $d([a_{i,j}, a_{\ell,j}]y_j, y_j) \leq \sigma$ for all $1\leq i,\ell\leq k$, $\omega$-a.s. Choosing $\sigma$ as in Proposition \ref{prop-non-collapsed-basepoints} we have that all these commutators belong to the same compact subgroup of $G_j$, $\omega$-a.s. Therefore the closure of the group generated by them is compact.\\
	\textbf{Step 2:} \emph{$\overline{A_j}$ is almost abelian for $\omega$-a.e.$(j)$.}\\
	The proof follows by Lemma \ref{lemma-almost-commutator}, once we have shown that $\overline{A_j}$ is compactly generated and that $\overline{[\overline{A_j}, \overline{A_j}]}$ is compact. 
	The second condition is easier. Indeed by Step 1 we know that there exists a point $z_j\in X_j$ fixed by $[A_j,A_j]$. Now take two elements $g_j,h_j\in \overline{A_j}$, so each of them is limit of isometries of $A_j$. The commutator $[g_j,h_j]$ is limit of commutators of elements of $A_j$, so it must fix $z_j$ too. This shows that $[\overline{A_j}, \overline{A_j}]$ fixes the point $z_j$, so its closure is compact. \\
	It remains to show that $\overline{A_j}$ is compactly generated. We will prove that $\overline{A_j} = \langle A_j, \overline{A_j} \cap B_j \rangle$, where $B_j := \text{Stab}_{G_j}(x_j)$. This would suffice since the set $A_j \cup (\overline{A_j} \cap B_j)$ is compact. Let $w_{n,j}$ be a sequence of elements of $A_j$ indexed by $n$ and converging to some $w_j \in G_j$. Since the orbits of $G_j$ are discrete (cp. \cite[Theorem 2.6]{Cav23-GHTD}) we know that $w_{n,j} x_j = w_jx_j$ for all $n \geq n_0$, for some $n_0$ depending on $j$. In particular $w_{n_0,j}^{-1}w_{n,j} x_j = x_j$ for all $n\geq n_0$. This means that we can write $w_{n,j} = w_{n_0,j} \cdot b_{n,j}$ for all $n\geq n_0$, for some $b_{n,j} \in B_j$. Since the sequence $w_{n,j}$ converges to $w_j$ then the sequence $b_{n,j}$ converges to $b_j := w_{n_0,j}^{-1} w_{j}$. Therefore $b_j \in \overline{A_j} \cap B_j$. Since $w_j = w_{n_0,j}\cdot b_j$ we deduce that $w_j \in \langle A_j, \overline{A_j} \cap B_j \rangle$. This shows that $\overline{A_j} \subseteq \langle A_j, \overline{A_j} \cap B_j \rangle$. The other inclusion is trivial, so $\overline{A_j} = \langle A_j, \overline{A_j} \cap B_j \rangle$.\\
	\textbf{Step 3:} \emph{$\rk(\overline{A_j}) = k$ for $\omega$-a.e.$(j)$.}\\
	$\overline{A_j}$ is almost abelian by Step 2. First of all $\rk(\overline{A_j}) \leq k$ for every $j$. Indeed let $N_j \triangleleft \overline{A_j}$ be a compact, open, normal subgroup such that $\overline{A_j}/N_j$ is virtually abelian, finitely generated and discrete with rank $=\rk(\overline{A_j})$. The commutator group of $\overline{A_j}/N_j$ is finite, being the image under the natural projection map of the compact group $\overline{[\overline{A_j}, \overline{A_j}]}$. We quotient $\overline{A_j}/N_j$ by its commutator subgroup and we obtain a finitely generated, abelian group of rank $=\rk(\overline{A_j})$ on which $\overline{A_j}$ surjects via a continuous homomorphism. Taking only the non-torsion part of this abelian group we can suppose that $\overline{A_j}$ surjects via a continuous homomorphism onto $\Z^\ell$, $\ell=\rk(\overline{A_j})$. Since the image of $A_j$ through this homomorphism is dense then it must be the whole $\Z^\ell$. Hence $\Z^\ell$ can be generated by $k$ elements, so $\rk(\overline{A_j}) = \ell \leq k$.\\
	The quantities $\rk(\overline{A_j})$ belong to the finite set $\lbrace 0,\ldots,k\rbrace$, so there exists $0\leq r \leq k$ such that $\rk(\overline{A_j}) = r$ for $\omega$.a.e.$(j)$. Suppose $r < k$. By Proposition \ref{prop-almost-abelian-lattice} we can find a closed, convex, $\overline{A_j}$-invariant subset $Y_j$ of $X_j$ isometric to $\R^r$ on which $\overline{A_j}$ acts as a lattice $\mathcal{L}_j$, $\omega$-a.s. The actions of $A_j$ and $\overline{A_j}$ coincide on $Y_j$, since the latter is discrete. Without loss of generality we can suppose that the first $r$ elements $a_{1,j}, \ldots, a_{r,j}$ form a basis $\mathcal{A}_j$ of $\mathcal{L}_j$. Recalling that the sequence is non-collapsed, i.e. we have a uniform lower bound on the length of every hyperbolic isometry of all the $G_j$'s, we can find $\tau > 0$ such that $\tau(\mathcal{L}_j) \geq\tau$ for $\omega$-a.e.$(j)$. Moreover, since the isometries $a_{i,\omega}$ are well defined, we can find $\Lambda \in \R$ such that $\ell(a_{i,\omega}) \leq \Lambda$ for every $i=1,\ldots,k$. It is obvious that $\ell(a_{i,j}) \leq 2\Lambda$ for $i=1,\ldots,k$, for $\omega$-a.e.$(j)$. Moreover each $a_{i,j}$ attains its minimum on $Y_j$ by \cite[Proposition II.6.2]{BH09}. This means that $\lambda(\mathcal{A}_j) \leq 2\Lambda$ $\omega$-a.s. Since $r<k$ the following is true on $Y_j$: $a_{k,j} = \Pi_{i=1}^{r} a_{i,j}^{\alpha_{i,j}}$ for some $\alpha_{i,j}$, $\omega$-a.s. Now observe that also $a_{k,j}$ attains its minimum on $Y_j$, and so that $\Vert a_{k,j} \Vert \leq 2\Lambda$ on $Y_j \cong \R^r$. We are in position to apply Proposition \ref{prop-lattices-uniform-word-length}: there exists a constant $C$, that does not depend on $j$, such that $\ell_{\mathcal{A}_j}(a_{k,j}) \leq C$. This means in particular that we can choose the $\alpha_{i,j}$'s satisfying $\sum_{i=1}^r \vert \alpha_{i,j}\vert \leq C$ for $\omega$-a.e.$(j)$. By triangle inequality the sequence of isometries $h_j := a_{k,j}^{-1}\cdot\Pi_{i=1}^{r} a_{i,j}^{\alpha_{i,j}}$ is admissible and defines the limit isometry $h_\omega = a_{k,\omega}^{-1}\cdot\Pi_{i=1}^{r} a_{i,\omega}^{\alpha_{i,\omega}}$,  where $\alpha_{i,\omega} = \omega$-$\lim \alpha_{i,j}$. Moreover the $h_j$'s are all elliptic, so $h_\omega$ is elliptic by Lemma \ref{lemma-key-elliptic-limit}. But $h_\omega$ belongs to $A$, so it attains its minimum on the closed, convex, invariant set $Y \subseteq X_\omega$, again by \cite[Proposition II.6.2]{BH09}. Since $A$ acts as a lattice on $Y$, the only possibility is that $h_\omega = \id$ on $Y$. This means that $a_{k,\omega} = \Pi_{i=1}^{r} a_{i,\omega}^{\alpha_{i,\omega}}$ on $Y$, which is absurd since $\lbrace a_{1,\omega}, \ldots, a_{k,\omega} \rbrace$ generates a $k$-dimensional lattice on $Y$. This concludes the proof of Step 3.\\
	\textbf{Step 4:} \emph{$A_\omega := \omega$-$\lim \overline{A_j} < G_\omega$ is almost abelian of rank $k$ and almost commensurated.}\\
	In the proof of Step 2 we showed that every element $g_j \in \overline{A_j}$ can be written as a product of an element of $A_j$ and an element of the compact group $B_j = \text{Stab}_{G_j}(x_j)$, i.e. $g_j = \Pi_{i=1}^k a_{i,j}^{\alpha_{i,j}} \cdot b_j$ with $b_j \in B_j$. Let us now take an admissible sequence $g_j$ with $g_j \in \overline{A_j}$. By definition there exists a finite $M$ such that $d(x_j, g_jx_j) \leq M$ for $\omega$-a.e.$(j)$. Since $b_j$ fixes $x_j$ this means $d(x_j, \Pi_{i=1}^k a_{i,j}^{\alpha_{i,j}}x_j) \leq M$ for $\omega$-a.e.$(j)$. As in the proof of Step 3 take a subset $Y_j$ of $X_j$ isomorphic to $\R^k$ on which $\overline{A_j}$ acts as a lattice $\mathcal{L}_j$ with $\tau(\mathcal{L}_j) \geq \tau$ and basis $\mathcal{A}_j = \lbrace a_{1,j}, \ldots, a_{k,j} \rbrace$ satisfying $\lambda(\mathcal{A}_j) \leq 2\Lambda$, for $\omega$-a.e.$(j)$. The isometry $h_j := \Pi_{i=1}^k a_{i,j}^{\alpha_{i,j}}$ attains its minimum $\ell(h_j) \leq M$ on $Y_j$. Another application of Proposition \ref{prop-lattices-uniform-word-length} as in Step 3 gives a uniform constant $C$, not depending on $j$, such that $\sum_{i=1}^k \vert \alpha_{i,j} \vert \leq C$. Therefore the admissible sequence of isometries $h_j$ converges to the isometry $h_\omega = \Pi_{i=1}^k a_{i,\omega}^{\alpha_{i,\omega}} \in A$. On the other hand the admissible sequence of isometries $b_j \in B_j$ converges to an isometry $b_\omega$ belonging to the compact group $B = \text{Stab}_{G_\omega}(x_\omega)$. We have just proved that $g_\omega = h_\omega \cdot b_\omega$. By the arbitrariness of $g_\omega$ we deduce that $A\subseteq A_\omega \subseteq A \cdot B$, i.e. $A$ is a cocompact subgroup of $A_\omega$. Then $A_\omega$ is almost abelian of rank $k$ by \cite[Lemma 3.4]{Cav23-GHTD} which is almost commensurated by \cite[Lemma 4.3]{Cav23-GHTD}.\\	
	\textbf{Step 5:} \emph{$\overline{A_j}$ is almost commensurated for $\omega$-a.e.$(j)$.}\\
	We recall that the groups $G_j$ are generated by the elements that move the basepoint $x_j$ by at most $2D_0$, see \cite[§2.4]{Cav23-GHTD}. Therefore, by Lemma \ref{lemma-commensurability-generators}, it is enough to check that $g_j \overline{A_j} g_j^{-1} \cap \overline{A_j}$ is cocompact in both $\overline{A_j}$ and $g_j \overline{A_j} g_j^{-1}$ for every $g_j$ such that $d(x_j,g_jx_j)\leq 2D_0$. By \cite[Lemma 3.4]{Cav23-GHTD} this is equivalent to ask that $\rk(g_j \overline{A_j} g_j^{-1} \cap \overline{A_j}) = k$ for $\omega$-a.e.$(j)$.
	Suppose that for $\omega$-a.e.$(j)$ we can find $g_j$ as above such that $\rk(g_j \overline{A_j} g_j^{-1} \cap \overline{A_j}) < k$. The sequence $g_j$ is admissible and defines the limit isometry $g_\omega \in G_\omega$. Moreover by definition $\omega$-$\lim (g_j \overline{A_j} g_j^{-1} \cap \overline{A_j}) = g_\omega A_\omega g_\omega^{-1} \cap A_\omega$. Step $4$ says that $A' = g_\omega A_\omega g_\omega^{-1} \cap A_\omega$ is almost abelian of rank $k$.
	Once again we can find a closed, convex, $A'$-invariant subset of $X_\omega$ which is isomorphic to $\R^k$ and on which $A'$ acts as a lattice generated by elements of the form $g_\omega a_{i,\omega}' g_\omega^{-1}$ for some $a_{i,\omega}' \in A_\omega$. Each $a_{i,\omega}'$ is the limit of isometries $a_{i,j}' \in G_j$. We define $A_j' = \langle g_j a_{1,j}' g_j^{-1}, \ldots, g_j a_{k,j}' g_j^{-1} \rangle < g_j \overline{A_j} g_j^{-1} \cap \overline{A_j}$. The same proofs of the Steps $1,2$ and $3$ give that the closure of $A_j'$ is almost abelian of rank $k$, giving the contradiction which concludes the proof of the theorem.
\end{proof}

Theorem \ref{theo-intro-td} easily follows.
\begin{proof}[Proof of Theorem \ref{theo-intro-td}]
	First of all we can suppose, up to passing to a subsequence, to be in the standard setting of convergence by Proposition \ref{prop-GH-compactness-packing}. Moreover by Proposition \ref{prop-always-noncollapsed} we can change the action of the groups $G_j$ without changing the spaces $X_j$ in order to be in the non-collapsed standard setting of convergence. Observe that this change does not affect the thesis which involves only geometric properties of the spaces. By Proposition \ref{prop-limittd-iff-noncollapsed} the limit group $G_\infty$ is totally disconnected. Theorem \ref{theo-intro-characterization} says that $\Eucl(X_j) = \Ab(G_j)$ for every $j \in \N \cup \lbrace \infty \rbrace$. On the other hand Theorem \ref{theo-intro-stability-abelian-rank} guarantees that $\Ab(G_\infty) = \lim_{j\to +\infty} \Ab(G_j)$. Therefore we conclude that $\Eucl(X_\infty) = \lim_{j \to + \infty} \Eucl(X_j)$.
\end{proof}

Theorem \ref{theo-intro-stability} is a particular case of Theorem \ref{theo-intro-td}.

\begin{proof}[Proof of Theorem \ref{theo-intro-rigidity-packed}]
	If it is not the case we can find proper, geodesically complete, $(P_0,r_0)$-packed spaces $X_j, Y_j$ with closed, totally disconnected, unimodular, $D_0$-cocompact groups $G_j < \Is(X_j)$ and $H_j < \Is(Y_j)$ and points $x_j \in X_j, y_j \in Y_j$ such that $d_{\textup{GH}}(\overline{B}(x_j, j), \overline{B}(y_j, j)) \leq \frac{1}{j}$ but $\Eucl(X_j) \neq \Eucl(Y_j)$. By Proposition \ref{prop-GH-compactness-packing} the sequences $X_j$ and $Y_j$ converge in the pointed Gromov-Hausdorff sense to some spaces $X_\infty$ and $Y_\infty$ respectively, up to subsequences. The condition on the Gromov-Hausdorff distance of pointed balls says that $X_\infty = Y_\infty$. Theorem \ref{theo-intro-td} ensures that 
	$$\Eucl(Y_j) = \Eucl(Y_\infty) = \Eucl(X_\infty) = \Eucl(X_j)$$ 
	for $j$ big enough, giving the contradiction.
\end{proof}

\section{The proof of Theorem \ref{theo-intro-limit-whole-isometries}}
We conclude with the proof of Theorem \ref{theo-intro-limit-whole-isometries}.
\begin{proof}[Proof of Theorem \ref{theo-intro-stability}]
	There exist $P_0,r_0 > 0$ such that each $X_j$ is $(P_0,r_0)$-packed (see for instance the proof of \cite[Lemma 5.8]{Cav21ter}). Proposition \ref{prop-CM-decomposition} gives decompositions $X_j = M_j \times \R^{k_j} \times N_j$ with  $M_j$ symmetric Riemannian manifolds of non-positive curvature, $k_j = \Eucl(X_j)$ and $\Is(N_j)$ totally disconnected and unimodular. Since $\Is(X_j)$ is $D_0$-cocompact, so it is $\Is(N_j)$. Call $Y_{j} = \R^{k_{j}} \times N_{j}$. Let us take a subsequence $\lbrace j_h \rbrace$ for which $M_{j_h}$ converges in the pointed Gromov-Hausdorff sense to $M_\infty$. Since $X_{j_h}$ converges to $X_\infty$ then also $Y_{j_h}$ must converge to some space $Y_\infty$ and $X_\infty = M_\infty \times Y_\infty$. On $\R^{k_j}$ we consider a $1$-cocompact lattice $\Gamma_j$. The group $G_j := \Gamma_j \times \Is(N_j)$ is a closed, totally disconnected, unimodular, cocompact group of isometries of $Y_j$ with codiameter at most $\sqrt{1 +D_0^2}$. Theorem \ref{theo-intro-td} implies that 
	$$\Eucl(Y_\infty) = \lim_{j \to + \infty} \Eucl(Y_j) = \lim_{j \to + \infty} \Eucl(X_j).$$
	Since $\Eucl(X_\infty) = \Eucl(M_\infty) + \Eucl(Y_\infty)$ the thesis follows.
\end{proof}
	
	\bibliographystyle{alpha}
	\bibliography{Euclidean_factors}
	
\end{document}